\newtheorem{theorem}{Theorem}
\newtheorem{corollary}[theorem]{Corollary}
\newtheorem{lemma}[theorem]{Lemma}
\newtheorem{proposition}[theorem]{Proposition}
\newtheorem{observation}[theorem]{Observation}
\theoremstyle{definition}
\newtheorem{definition}[theorem]{Definition}
\theoremstyle{remark}
\newtheorem{remark}{Remark}
\theoremstyle{action}
\newtheorem{action}{Action}
\def\--{\mbox{--}}
\newcommand{\B}[1]{B$_{#1}$-VPG}
\definecolor{forestgreen}{rgb}{0.13, 0.55, 0.13}
\definecolor{darkseagreen}{rgb}{0.56, 0.74, 0.56}
\definecolor{forestgreen}{rgb}{0.13, 0.55, 0.13}
\definecolor{darkblue}{rgb}{0, 0, 102}
\definecolor{lightblueArrow}{rgb}{51, 51, 255}
\definecolor{lime}{HTML}{A6CE39}
\DeclareRobustCommand{\orcidicon}{
	\begin{tikzpicture}
	\draw[lime, fill=lime] (0,0) circle [radius=0.16] 
	node[white] {{\fontfamily{qag}\selectfont \tiny ID}};
	\draw[white, fill=white] (-0.0625,0.095) 
	circle [radius=0.007];
	\end{tikzpicture}
}
\xdef\csname orcid\x\endcsname{\noexpand\href{https://orcid.org/\csname orcidauthor\x\endcsname}{\noexpand\orcidicon}}
\title{\B0 Representation of \\ AT-free Outerplanar Graphs}
\author[1]{Sparsh Jain\orcidSJ\footnote{A part of this work was done while at Indian Institute of Technology Palakkad.}}
\author[2]{Sreejith K. Pallathumadam\orcidSKP $^{(\textrm{\Letter})}$}
\author[2]{\\ Deepak~Rajendraprasad\orcidDR}
\affil[1]{College of Computing, Georgia Institute of Technology, Atlanta, Georgia, USA \\ sparsh.jain@gatech.edu}
\affil[2]{Indian Institute of Technology Palakkad, Palakkad, India \\ 111704002@smail.iitpkd.ac.in, deepak@iitpkd.ac.in}
\begin{document}
\maketitle
\pagenumbering{arabic}
\begin{abstract}
A \emph{$k$-bend path} is a non-self-intersecting polyline in the plane made of at most $k+1$ axis-parallel line segments.  
\emph{\B{k}} is the class of graphs which can be represented as intersection graphs of $k$-bend paths in the same plane.
In this paper, we show that all AT-free outerplanar graphs  are \B0, i.e., intersection graphs of horizontal and vertical line segments in the plane.  
Our proofs are constructive and give a polynomial time \B0 drawing algorithm for the class.

Following a long line of improvements, Gon\c{c}alves, Isenmann, and Pennarun [SODA 2018] showed that all planar graphs are \B1. 
Since there are planar graphs which are not \B0, characterizing \B0 graphs among planar graphs becomes interesting. 
Chaplick et al.\ [WG 2012] had shown that it is NP-complete to recognize \B{k} graphs within \B{k+1}. Hence recognizing \B0 graphs within \B1 is NP-complete in general, but the question
is open when restricted to planar graphs.
There are outerplanar graphs and AT-free planar graphs which are not \B0. 
This piqued our interest in AT-free outerplanar graphs.  
\footnote{
	A preliminary version of this work was presented at the $8^{th}$ International Conference on Algorithms and Discrete Applied Mathematics (CALDAM) 2022 \cite{jain2022b_0}. 
		In this extended version we show that a proper superclass of AT-free outerplanar graphs, which we name as \emph{linear outerplanar graphs}, are \B0. We believe
		that his new class is interesting in its own right.
}


\paragraph{Keywords:} Outerplanar graphs . AT-free . \B0 . Connectivity augmentation . Outerpath . Linear outerplanar graph . Graph drawing.
\end{abstract}

\section{Introduction}
	
	
A \emph{$k$-bend path} is a simple path in a two-dimensional grid with at most
$k$ bends. Geometrically, they are non-self-intersecting polylines in the plane
made of at most $k+1$ axis-parallel (horizontal or vertical) line segments.
\emph{Vertex intersection graphs of Paths on a Grid (VPG)} (resp.,
\emph{\B{k}}) is the class of graphs which can be represented as intersection
graphs of (resp., $k$-bend) paths in a two-dimensional grid.  The \emph{bend
number} of a graph $G$ in VPG is the minimum $k$ for which $G$ is in \B{k}.  
One motivation to study \B{k} graphs comes from VLSI circuit design where the
paths correspond to wires in the circuit.  A natural concern in VLSI design is
to reduce the number of bends in each path (wire) in the representation. A
second motivation is that certain algorithmic tasks become easier when
restricted to \B1 or \B0 graphs (cf. \cite{mehrabi2017approximation}).

Planar graphs have received the maximum attention from the perspective
of \B{k} representations, some of which we describe in
Section~\ref{secLiterature}. Following up on a series of results and
conjectures by various authors, Gon\c{c}alves, Isenmann, and Pennarun in 2018
showed that all planar graphs are \B1 \cite{gonccalves2018planar}.  This is
tight since many simple planar graphs like $4$-wheel, $3$-sun, triangular
prism, to name a few,  are not \B0. This makes the question of characterizing
\B0 planar graphs very appealing. Characterizing \B0 outerplanar graphs will be a good step in this direction 
since some of the structures that forbid a planar graph from being \B0
are also present among outerplanar graphs.  
Outerplanar graphs were known to be \B1 \cite{CATANZARO201784} before the same was shown for planar graphs. 
Chaplick et al. \cite{chaplick2012bend} had shown that it is NP-complete to decide whether a given graph $G$ is in \B{k} even when $G$ is guaranteed to be in \B{k+1}. 
Hence recognizing \B0 graphs with in \B1 is NP-complete in general, but the question is open when restricted to planar graphs or outerplanar graphs.

This article is an outcome of our effort to characterize \B0 outerplanar graphs. One can see from the geometry that the closed neighborhood of every vertex in a \B0 graph is an interval graph \cite{golumbic2013intersectionForDiamond}. We strengthen this necessary condition (Proposition~\ref{propNecessity}) by identifying adjacent vertices which are forced to be represented by collinear segments in any \B0 drawing. But this is still not sufficient to characterize \B0 outerplanar graphs (Figure~\ref{fig:notSufficient}).  
However,  we were able to show that, if the outerplanar graph itself is AT-free, then it is \B0 (Theorem~\ref{thm:ATfreeB0}). 
We cannot extend this result to AT-free planar graphs since we have examples of AT-free planar graphs, like $4$-wheel and triangular prism, which are not \B0.

While it is relatively easier to find a \B0 drawing for biconnected AT-free outerplanar graphs, handling cutvertices turned out to be more challenging. Rather than trying to join \B0 drawings of individual blocks, we found it easier to embed the given graph as an induced subgraph of a biconnected outerpath. 

\begin{definition}[Outerpath]
\label{def:outerpath}
		An \emph{outerpath} is an outerplanar graph which admits a planar embedding whose weak dual is a path.
\end{definition}

Note that outerpaths need not be biconnected.
All the five graphs in Figure \ref{fig:bigEg}.(a) are outerpaths.

Our proof has essentially two parts with biconnected outerpaths forming the bridge between the two. 
The first part is a structural result which shows that any AT-free outerplanar graph can be realized as an induced subgraph of a biconnected outerpath. The second part is a \B0 drawing procedure for biconnected outerpaths. 
Both the parts have a potential to be generalized. Since \B0 is easily seen to be hereditary class, the result naturally extends to all induced subgraphs of biconnected outerpaths. This prompted us to name this class (Definition~\ref{def:linear}) and study it on its own merit.

\begin{definition}[Linear Outerplanar Graph]
\label{def:linear}
	An outerplanar graph is \emph{linear} if it is {isomorphic to} a subgraph of a biconnected outerpath.
\end{definition}

\begin{figure}[h]
	
	\newcommand{\clines}{black}
	\newcommand{\cconnect}{blue}
	\newcommand{\cconnecttwo}{forestgreen}
	\newcommand{\cvertex}{black}
	
	\newcommand{\colorv}{black}
	\newcommand{\colorother}{black}
	\newcommand{\candidate}{black}
	
	\centering
	\begin{subfigure}[b]{0.9\linewidth}
		
		\centering
		\begin{tikzpicture}[scale=.7]
		
		\draw[\colorother, very thick] (0,0) -- (-3,0) -- (-3,2) -- cycle;
		\draw[\colorv, very thick] (-3,2) -- (-3,0) -- (-5,0) -- cycle;

		\draw[\colorother, very thick] (0,0) -- (4,0); 
		\draw[\colorother, very thick] (4,0) -- (5,1);
		\draw[\colorother, very thick] (5,1) -- (7,1);
		
		\draw[\colorother, very thick] (4,0) -- (5,-1);
		\draw[\colorother, very thick] (5,-1) -- (7,-1);
		\draw[\colorother, very thick] (7,-1) -- (8,-1.5);	
		
		\draw[\colorother, very thick] (0,0) -- (0,1) -- (-1,2) -- (-.5,3);
		
		\draw[\colorother, very thick] (-3,2) -- (-2,2) -- (-1.5,3);
		\draw[\colorother, very thick] (-3,2) -- (-4,2) -- (-4.5,3);
		\filldraw[\colorother] (-2,2) circle (3pt);
		\filldraw[\colorother] (-1.5,3) circle (3pt);
		\filldraw[\colorother] (-4.5,3) circle (3pt);
		
		\draw[\colorother, very thick] (-5,0) -- (-6.5,-.5) -- (-8,-.5) -- (-9,0) -- (-8,1) -- (-6.5,1) -- cycle;
		\draw[\colorother, very thick] (-9,0) -- (-10,0) -- (-11, 1) -- (-8,1) -- cycle;
		\filldraw[\colorother] (-6.5,-.5) circle (3pt);
		\filldraw[\colorother] (-8,-.5) circle (3pt);
		\filldraw[\colorother] (-8,1) circle (3pt);
		\filldraw[\colorother] (-9,0) circle (3pt);
		\filldraw[\colorother] (-10,0) circle (3pt);
		\filldraw[\colorother] (-11,1) circle (3pt);
		\filldraw[\colorother] (-6.5,1) circle (3pt);
		
		\draw[\colorother, very thick] (-10,0) -- (-11,-1) -- (-12,-1) -- (-11,0) -- cycle;
		\draw[\colorother, very thick] (-10,0) -- (-9,-1) -- (-9,-2) -- (-10,-2) -- (-10.7,-1.5) -- cycle;
		\draw[\colorother, very thick] (-10,0) -- (-10,-2);
		\draw[\colorother, very thick] (-10,0) -- (-11,.5) -- (-12,.5) -- (-13,1);
		\filldraw[\colorother] (-11,-1) circle (3pt);
		\filldraw[\colorother] (-12,-1) circle (3pt);
		\filldraw[\colorother] (-11,0) circle (3pt);
		\filldraw[\colorother] (-9,-1) circle (3pt);
		\filldraw[\colorother] (-9,-2) circle (3pt);
		\filldraw[\colorother] (-10,-2) circle (3pt);
		\filldraw[\colorother] (-10.7,-1.5) circle (3pt);
		\filldraw[\colorother] (-11,.5) circle (3pt);
		\filldraw[\colorother] (-12,.5) circle (3pt);
		\filldraw[\colorother] (-13,1) circle (3pt);
		
		\draw[\colorother, very thick] (-11,1) -- (-12,1.5) -- (-11,2) -- (-10,2);
		\draw[\colorother, very thick] (-6.5,1) -- (-7,2) -- (-8,2);
		\filldraw[\colorother] (-12,1.5) circle (3pt);
		\filldraw[\colorother] (-11,2) circle (3pt);
		\filldraw[\colorother] (-10,2) circle (3pt);
		\filldraw[\colorother] (-7,2) circle (3pt);
		\filldraw[\colorother] (-8,2) circle (3pt);
		
		\draw[\colorother, very thick] (-5,0) -- (-6,-2) -- (-5,-1.5) -- (-4,-2) -- cycle;
		\draw[\colorother, very thick] (-7,-1.5) -- (-6,-2) -- (-5,-1.5) -- (-4,-2) -- (-3,-1.5);
		\filldraw[\colorother] (-6,-2) circle (3pt);
		\filldraw[\colorother] (-5,-1.5) circle (3pt);
		\filldraw[\colorother] (-4,-2) circle (3pt);
		\filldraw[\colorother] (-7,-1.5) circle (3pt);
		\filldraw[\colorother] (-3,-1.5) circle (3pt);
		
		\draw[\colorother, very thick] (-5,0) -- (-5,1) -- (-6, 2) -- (-6.5,3);
		\filldraw[\colorother] (-5,1) circle (3pt);
		\filldraw[\colorother] (-6,2) circle (3pt);
		\filldraw[\colorother] (-6.5,3) circle (3pt);
		
		\draw[\colorother, very thick] (-5,0) -- (-4.5,1.5) -- (-5.4, 3);
		\filldraw[\colorother] (-4.5,1.5) circle (3pt);
		\filldraw[\colorother] (-5.4,3) circle (3pt);

		\draw[\colorother, very thick] (0,0) -- (1,-1) -- (0,-2) -- (-1.5,-2.5) -- (-1,-1) -- cycle;
		\draw[\colorother, very thick] (0,-2) -- (0,0);
		
		\draw[\colorother, very thick] (0,0) -- (3,-1) -- (4,-2) -- (5,-2);
		
		\draw[\colorother, very thick] (0,0) -- (1,2) -- (2,2.5) -- (3,2) -- (3.5,1) -- cycle;
		\draw[\colorother, very thick] (3,2) -- (0,0);
		
		\draw[\colorother, very thick] (1,2) -- (.5,3);
		\draw[\colorother, very thick] (.5,3) -- (1.5,3.5);
		
		\draw[\colorother, very thick] (3.5,1) -- (4.5,2);
		\draw[\colorother, very thick] (4.5,2) -- (5.5,2);
		\draw[\colorother, very thick] (5.5,2) -- (6.5,3);
		
		
		\filldraw[\colorother] (0,0) circle (3pt);
		\filldraw[\colorv] (-3,0) circle (3pt);
		\filldraw[\colorv] (-3,2) circle (3pt);
		\filldraw[\colorv] (-5,0) circle (3pt);
		\filldraw[\colorother] (4,0) circle (3pt);
		\filldraw[\colorother] (5,1) circle (3pt);
		\filldraw[\colorother] (7,1) circle (3pt);
		\filldraw[\colorother] (5,-1) circle (3pt);
		\filldraw[\colorother] (7,-1) circle (3pt);
		\filldraw[\colorother] (8,-1.5) circle (3pt);
		\filldraw[\colorother] (0,1) circle (3pt);
		\filldraw[\candidate] (-1,2) circle (3pt);
		\filldraw[\colorother] (-.5,3) circle (3pt);
		\filldraw[\colorother] (-1.5,3) circle (3pt);
		
		\filldraw[\colorother] (1,-1) circle (3pt);
		\filldraw[\candidate] (0,-2) circle (3pt);
		\filldraw[\colorother] (-1,-1) circle (3pt);
		
		\filldraw[\colorother] (5,-2) circle (3pt);
		
		\filldraw[\colorv] (3,-1) circle (3pt);
		\filldraw[\colorv] (4,-2) circle (3pt);
		\filldraw[\colorv] (-1.5,-2.5) circle (3pt);

		\filldraw[\colorother] (1,2) circle (3pt);
		\filldraw[\colorother] (2,2.5) circle (3pt);
		\filldraw[\colorother] (3,2) circle (3pt);
		\filldraw[\colorother] (3.5,1) circle (3pt);
		\filldraw[\colorother] (.5,3) circle (3pt);
		\filldraw[\colorother] (1.5,3.5) circle (3pt);
		\filldraw[\colorother] (4.5,2) circle (3pt);
		\filldraw[\colorother] (5.5,2) circle (3pt);
		\filldraw[\colorother] (6.5,3) circle (3pt);
		
		\end{tikzpicture}
	\end{subfigure}
	\caption{A Linear Outerplanar Graph}
	\label{fig:linearOuterplanar}
\end{figure}
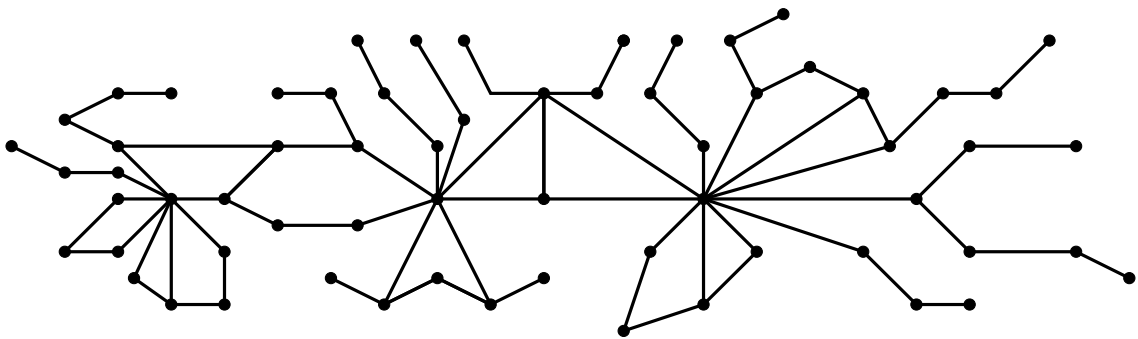

We give a complete characterization of this class in Theorem~\ref{thm:LinearCharacterization}. Though the characterization seems technical, it is very easy to visualize and gives a poly-time recognition algorithm.  
As a pleasant surprise, we also discover that every graph in this class can be realized both as an induced subgraph as well as a spanning subgraph of (different) biconnected outerpaths.
Figure~\ref{fig:linearOuterplanar} shows an example of a linear outerplanar graph.


The second part of our proof, the drawing procedure for biconnected outerpaths, can also be extended to a larger class of graphs than biconnected outerpaths, but this we set aside for a future work. 

\subsection{Organization}

After recalling some standard graph theoretic terminology in Section~\ref{secTerminology} and a brief literature review in Section~\ref{secLiterature}, we layout our proofs in three sections. 
Section \ref{sec:B0Rep} has the \B0 drawing procedure for \emph{biconnected outerpaths}. Section \ref{sec:linear} contains the characterization of linear outerplanar graphs.
In Section \ref{sec:atfree}, we prove that all AT-free outerplanar graphs are \emph{linear} thereby completing the proof of the titular result. 
We conclude with Section \ref{sec:Conclusion}, where we describe some necessary conditions for the existence of a \B0 representation. 
This may help in characterizing \B0 outerplanar graphs.

\subsection{Terminology}\label{secTerminology}

	The \emph{closed neighborhood} $N[v]$ of a vertex $v$ in a graph $G$ is the set containing $v$ and and its neighbors in $G$.
	A set of three independent vertices is called an \emph{asteroidal triple (AT)} when there exists a path among each pair of them containing no vertex from the closed neighborhood of the third vertex.
	An \emph{AT-free} graph is a graph which does not have an AT.
	
	A \emph{plane graph} is an embedding of a planar graph in the plane with no crossing edges. Let $G$ be a plane graph. 
	The \emph{dual} of $G$ is a graph that has a vertex for each face of $G$ and an edge between two of its vertices when the corresponding faces of $G$ share an edge.
	The \emph{weak dual} of $G$ is obtained from its dual by removing the vertex corresponding to the outer face. 
	An edge of $G$ incident to the outer face of $G$ is called a \emph{boundary edge} and its endpoints are called \emph{boundary neighbors} of each other.
	The remaining edges of $G$ are called \emph{internal edges}. 
	A \emph{leaf face} is a face with at most one internal edge. 
	A planar graph is \emph{outerplanar} if it has a plane embedding in which all the vertices are incident on the outer face.
	Outerplanar graphs will always be drawn in such a way that the outer face contains all the vertices, and the terminology of faces, duals, weak duals, boundary edges and internal edges will be used assuming such a plane drawing. 
	Let $G$ be an outerplanar graph. The weak dual of $G$ is a forest \cite{fleischner1974outerplanar} and we denote it by $\mathcal{T}_G$. Further, we call $\mathcal{T}_G$ a \emph{linear forest} if each component in $\mathcal{T}_G$ is a path.
	
	Let $G$ be a graph. 
	A \emph{$k^+$-vertex} in $G$ is a vertex having at least $k$ neighbors in $G$.
	A \emph{leaf edge} is an edge having one endpoint of degree one.
	A subgraph $H$ of $G$ is \emph{spanning} if $V(H)=V(G)$, and \emph{induced} if $E(H) = \{xy ~|~ x,y \in V(H),~ xy \in E(G)\}$.
	A graph induced by a subset $S$ of the vertices of $G$ is denoted by $G[S]$.
	A subset of vertices in a graph is called a \emph{separator} if its removal increases the number of components of the graph.
	A vertex $x$ is a \emph{cutvertex} if $\{x\}$ is a separator.
	A graph is \emph{k-connected} if it does not have a separator of size smaller than $k$.
	A graph is said to be \emph{connected} (resp. \emph{biconnected}) if it is 1-connected (resp. 2-connected).
	A \emph{block} of a graph is a maximal biconnected subgraph of the graph. A \emph{trivial block} is a block containing at most two vertices.



		A graph $G$ is \emph{H-free} if $G$ does not contain an induced subgraph isomorphic to $H$.
		A graph $G$ is said to be \emph{H-minor-free} if it does not contain a minor isomorphic to $H$.
		We use $C_k$ to denote the simple cycle on $k$ vertices.
		A cycle on $k$ vertices $x_0, \ldots, x_{k-1}$ where each $x_i$ is adjacent to $x_{i+1}$ (addition is modulo $k$) can also be denoted as $x_0, \ldots, x_{k-1},x_0$.
		A $C_4$ together with an additional vertex $v$ adjacent to all the vertices of $C_4$ is called a \emph{4-wheel}.
		A \emph{triangular prism} is the complement of $C_6$.
		An \emph{interval graph} is an intersection graph of a set of intervals on $\mathbb{R}$.

	\subsection{Literature}\label{secLiterature}

	The class \B{k} was introduced by Asinowski et al.\ in 2012 \cite{asinowski2012vertex}. Nevertheless, these graphs were previously studied in various forms. One of them is \emph{grid intersection
	graphs} (GIG) which are equivalent to bipartite \B0 graphs.
	The recognition problem for string graphs and hence VPG graphs is NP-complete \cite{kratochvil1991string,schaefer2003recognizing}.
	The recognition problem for 
	\B0 graphs is NP-complete  \cite{kratochvil1994intersection}.
	{\B0 characterizations are known for block graphs
		\cite{alcon2017vertex}, split graphs, chordal bull-free graphs, chordal claw-free graphs
		\cite{golumbic2013intersectionForDiamond} and cocomparability graphs \cite{pallathumadam2022characterization}.}

	\emph{Segment intersection graphs} are
	intersection graphs of line segments in the plane. Chalopin and Gon\c{c}alves
	in 2009 showed that every planar graph is a segment intersection graph
	\cite{chalopin2009every}, confirming a conjecture of Scheinerman from 1984
	\cite{scheinerman1984intersection}. One way to refine the class of segment
	intersection graphs is to restrict the number of directions permitted for the
	segments. If the number of directions is limited to two, we rediscover \B0. 
	\emph{k-DIR} graphs are intersection graphs of line segments that can lie in at most $k$ directions in the plane.
	It is known that bipartite planar graphs are $2$-DIR \cite{hartman1991grid,czyzowicz1998simple,de1991representation} and triangle-free planar graphs are $3$-DIR
	\cite{de2002triangle}. 
	West conjectured that any planar graph is $4$-DIR \cite{west1991open} which was recently refuted by Gon\c{c}alves in 2020 \cite{gonccalves2020not}. 
	Before the celebrated result by Gonçalves et al.\ that planar graphs are \B{1} \cite{gonccalves2018planar}, we had a chronology of results on \B{k} representation of  planar graphs.
	Since $2$-DIR graphs are equivalent to \B0, bipartite planar graphs \B0.
	In \cite{asinowski2012vertex}, Asinowski et al.\ showed that planar graphs are \B{3} and conjectured that it is tight. 
	Disproving this conjecture, Chaplick and Ueckerdt proved that planar graphs have a \B2 representation \cite{chaplick2012planar}. 
	This adds to the appeal for characterizing \B0 planar graphs.
	

	Outerpaths have many geometric representations like \emph{balanced circle-contact} representations \cite{alam2014balanced}, \emph{geometric simultaneous embeddings} with a matching \cite{cabello2011geometric}, and 
	\emph{partial geometric simultaneous embeddings} with another outerpath \cite{evans2014column}. 
	All these representations will extend to linear outerplanar graphs because of Theorem \ref{thm:LinearCharacterization} and Remark \ref{rem:numNewVert}. 
	Babu et al. provides an algorithm   to augment outerplanar graphs of pathwidth $p$ to biconnected outerplanar supergraphs of pathwidth $\mathcal{O}(p)$ \cite{babu20142}.
	Connectivity augmentation of outerplanar graphs using minimum number of additional edges is studied in \cite{garcia2010augmenting,kant1996augmenting}.
	Bar{\'a}t et al. have characterized the graphs with pathwidth at most two \cite{barat2012structure} and our class is a strict subclass of that. 

\section[\B0 Drawing of Biconnected Outerpaths]{\B0 Representation of Biconnected Outerpaths}
	\label{sec:B0Rep}
	
	It's drawing time! In this section, we show that every biconnected outerpath is \B0 (Theorem \ref{thm:2Cvpg}). The proof of Theorem \ref{thm:2Cvpg} is constructive and it draws a \B0 representation for any biconnected outerpath (cf. Figure \ref{fig:drawing} for example).  Since \B0 is easily seen to be
	a hereditary graph class (closed under induced subgraphs), 
	and since every linear outerplanar graph can be represented as an induced subgraph of a biconnected outerpath (Theorem \ref{thm:LinearCharacterization}), it follows that all linear outerplanar graphs are \B0.
	Furthermore, since we show in Section \ref{sec:atfree} that all AT-free outerplanar graphs are linear (Lemma~\ref{lemma:atfree}), the main result of this article follows.

	\begin{figure}
		
		\centering
		\begin{subfigure}[b]{0.4\linewidth}
			
			\centering
			\begin{tikzpicture}[scale=.45]
			\draw[black, very thick] (0,0) -- (-2,0) -- (-2,2) -- cycle;
			\draw[black, very thick] (0,0) -- (-2,-2) -- (-2,0) -- cycle;

			\node (p1) at (-3.25,.6) {\tiny $F_1$};
			\node (p1) at (9.85,.6) {\tiny $F_{13}$};
			\node (p1) at (-4.3,2) {\tiny $1$};
			\node (p1) at (-4.3,0) {\tiny $2$};
			\node (p1) at (-2.2,.6) {\tiny $3$};
			\node (p1) at (-4.3,-2.3) {\tiny $4$};
			\node (p1) at (-1.65,-2.3) {\tiny $5$};
			\node (p1) at (-0,-.5) {\tiny $6$};
			\node (p1) at (-2.25,2) {\tiny $7$};
			\node (p1) at (-1.3,4) {\tiny $8$};
			\node (p1) at (0.2,2.25) {\tiny $9$};
			\node (p1) at (2.55,-.5) {\tiny $11$};
			\node (p1) at (2.7,2.25) {\tiny $10$};
			\node (p1) at (2.55,-2.2) {\tiny $12$};
			\node (p1) at (4.27,-1.2) {\tiny $13$};
			\node (p1) at (5.6,.3) {\tiny $14$};
			\node (p1) at (5.6,1.8) {\tiny $15$};
			\node (p1) at (4.15,3.2) {\tiny $16$};
			\node (p1) at (4.55,-2.5) {\tiny $17$};
			\node (p1) at (5.6,-1.3) {\tiny $18$};
			\node (p1) at (6.85,-1.3) {\tiny $19$};
			\node (p1) at (8,-2.4) {\tiny $20$};
			\node (p1) at (8.3,-1.3) {\tiny $21$};
			\node (p1) at (9.7,-1) {\tiny $22$};
			\node (p1) at (8.9,1.4) {\tiny $23$};
			\node (p1) at (7,2.3) {\tiny $24$};
			\node (p1) at (10.5,1.3) {\tiny $25$};

			\draw [black, ultra thick] (-4,-2) -- (-4,0);
			\draw [black, ultra thick] (-4,-2) -- (-2,-2);
			
			\draw [black, ultra thick] (-2,2) -- (-1,4);
			\draw [black, ultra thick] (-1,4) -- (0,2);

			\draw[black, very thick] (-2,0) -- (-4,0) -- (-4,2) -- cycle;
			
			\draw[black, very thick] (0,0) rectangle (3,2);
			\draw[black, very thick] (3,0) -- (3,2) -- (4.5,3) -- (6,2) -- (6,0) -- cycle;
			
			\draw[black, very thick] (6,0) -- (7,2) -- (9,1) --(10,-.5) -- (8,-1) -- (6,0) -- cycle;
			\draw[black, very thick] (10,-.5) -- (9,1) -- (10.5,1) -- cycle;
			
			\draw[black, very thick] (6,0) -- (6,-1.5) -- (5,-2.5) -- (4.5,-1.5) -- cycle;		
			\draw[black, very thick] (6,0) -- (6.5,-1.5) -- (6,-1.5) -- cycle;
			
			\draw [black, ultra thick] (3,0) -- (3,-2);
			\draw [black, ultra thick] (3,-2) -- (4.5,-1.5);

			\draw [black, ultra thick] (6.5,-1.5) -- (7.5,-2.5);
			\draw [black, ultra thick] (7.5,-2.5) -- (8,-1);
			
			\filldraw[black] (0,0) circle (3pt);
			\filldraw[black] (-2,0) circle (3pt);
			\filldraw[black] (-2,2) circle (3pt);
			\filldraw[black] (-2,-2) circle (3pt);			
			
			\filldraw[black] (-4,-2) circle (3pt);
			\filldraw[black] (-4,0) circle (3pt);
			\filldraw[black] (-1,4) circle (3pt);
			\filldraw[black] (0,2) circle (3pt);
			
			\filldraw[black] (-4,2) circle (3pt);
			\filldraw[black] (3,2) circle (3pt);
			\filldraw[black] (3,0) circle (3pt);
			\filldraw[black] (4.5,3) circle (3pt);
			
			\filldraw[black] (6,2) circle (3pt);
			\filldraw[black] (6,0) circle (3pt);
			\filldraw[black] (7,2) circle (3pt);
			\filldraw[black] (9,1) circle (3pt);

			\filldraw[black] (10,-.5) circle (3pt);
			\filldraw[black] (8,-1) circle (3pt);
			\filldraw[black] (6,0) circle (3pt);
			\filldraw[black] (10.5,1) circle (3pt);
			
			\filldraw[black] (4.5,-1.5) circle (3pt);
			\filldraw[black] (6.5,-1.5) circle (3pt);
			\filldraw[black] (6,-1.5) circle (3pt);
			\filldraw[black] (5,-2.5) circle (3pt);
			
			\filldraw[black] (3,0) circle (3pt);
			\filldraw[black] (3,-2) circle (3pt);
			\filldraw[black] (6.5,-1.5) circle (3pt);
			\filldraw[black] (7.5,-2.5) circle (3pt);
			
			\filldraw[black] (8,-1) circle (3pt);
			\end{tikzpicture}
		\end{subfigure}
		\begin{subfigure}[b]{0.5\linewidth}
			\centering
			\begin{tikzpicture}[scale=.6]

			\draw[step=0.5,lightgray,ultra thin] (0,0) grid (13,10);
			
			\node (pe) at (5.8,5.2) {\tiny $1$};
			\filldraw [black] (5.9,4.9) rectangle (6.1,5.1);
			
			\draw[black, thick] (6,4) -- (6,5) -- (7,5) -- (7,4) -- (6,4) -- cycle; 
			\node (pe) at (5.8,4.2) {\tiny $2$};
			\node (pe) at (6.8,5.2) {\tiny $3$};
			\node (pe) at (6.2,3.8) {\tiny $4$};
			\node (pe) at (7.2,4.2) {\tiny $5$};
			
			\draw [black, thick, shorten <= -2pt, shorten >=-2pt] (7.1,5) -- (8.4, 5);
			\draw [black, thick, shorten <= -2pt, shorten >=-2pt] (7.1,5.1) -- (8.4, 5.1);
			\node (pe) at (7.8,5.3) {\tiny $6$};
			\node (pe) at (7.8,4.7) {\tiny $3$};
			
			\draw[black, thick] (8.5,5.1) -- (9.5,5.1) -- (9.5,3) -- (8.5,3) -- (8.5,5.1) -- cycle; 
			\node (pe) at (8.7,5.3) {\tiny $6$};
			\node (pe) at (8.3,3.2) {\tiny $7$};
			\node (pe) at (8.7,2.8) {\tiny $8$};
			\node (pe) at (9.7,3.2) {\tiny $9$};
			
			\draw[black, thick] (9.5,5.1) -- (9.5,6) -- (10.5,6) -- (10.5,5.1) -- (9.5,5.1) -- cycle; 
			\node (pe) at (10.2,4.8) {\tiny $6$};
			\node (pe) at (10.75,5.25) {\tiny $11$};
			\node (pe) at (9.3,5.8) {\tiny $9$};
			\node (pe) at (9.7,6.2) {\tiny $10$};
			
			\draw[black, thick] (10.5,6) -- (10.5,7.5) -- (11.5,7.5) -- (11.5,6) -- (10.5,6) -- cycle; 
			\node (pe) at (10.25,6.7) {\tiny $11$};
			\node (pe) at (11.3,5.8) {\tiny $10$};
			\node (pe) at (11.7,6.2) {\tiny $16$};
			\node (pe) at (11.7,7.2) {\tiny $15$};
			\node (pe) at (11.3,7.7) {\tiny $14$};
			\draw [black, thin, shorten <= -2pt, shorten >=-2pt] (11.495,6.75) -- (11.505, 6.75);
			
			\draw[black, thick] (10.5,7.5) -- (4.5,7.5) -- (4.5,8.5) -- (10.5,8.5) -- (10.5,7.5) -- cycle; 
			\node (pe) at (10.75,8.3) {\tiny $11$};
			\node (pe) at (5.25,7.3) {\tiny $14$};
			\node (pe) at (9.75,8.7) {\tiny $12$};
			\node (pe) at (4.25,8.3) {\tiny $13$};

			\draw[black, thick] (4.5,7.5) -- (4.5,1.5) -- (3.5,1.5) -- (3.5,7.5) -- (4.5,7.5) -- cycle; 
			\node (pe) at (4.7,1.75) {\tiny $13$};
			\node (pe) at (4.2,1.25) {\tiny $17$};
			\node (pe) at (4,7.7) {\tiny $14$};
			\node (pe) at (3.2,1.75) {\tiny $18$};
			
			\draw[black, thick] (3.5,7.5) -- (2.5,7.5) -- (2.5,9.5) -- (3.5,9.5) -- (3.5,7.5) -- cycle; 
			\node (pe) at (3,7.3) {\tiny $14$};
			\node (pe) at (3.7,9.3) {\tiny $19$};
			\node (pe) at (3.3,9.7) {\tiny $20$};
			\node (pe) at (2.3,9.3) {\tiny $21$};
			
			\draw[black, thick] (2.5,7.5) -- (2.5,.5) -- (1.5,.5) -- (1.5,7.5) -- (2.5,7.5) -- cycle; 
			\node (pe) at (1.7,7.75) {\tiny $14$};
			\node (pe) at (2.75,.75) {\tiny $21$};
			\node (pe) at (2.3,.3) {\tiny $22$};
			\node (pe) at (1.25,5.75) {\tiny $24$};
			\node (pe) at (1.25,2.75) {\tiny $23$};
			\draw [black, thin, shorten <= -2pt, shorten >=-2pt] (1.445,4.25) -- (1.505, 4.25);
			
			\node (pe) at (1.3,0.25) {\tiny $25$};
			\filldraw [black] (1.4,0.4) rectangle (1.6,0.6);

			\end{tikzpicture}
		\end{subfigure}
		\caption{A biconnected outerpath $G$ and a \B0 representation of it. The collinear overlapping line segments are drawn a little apart for clarity. The point segments (for eg. vertices $1$ and $25$) are drawn as black squares.} 
		\label{fig:drawing}
	\end{figure}
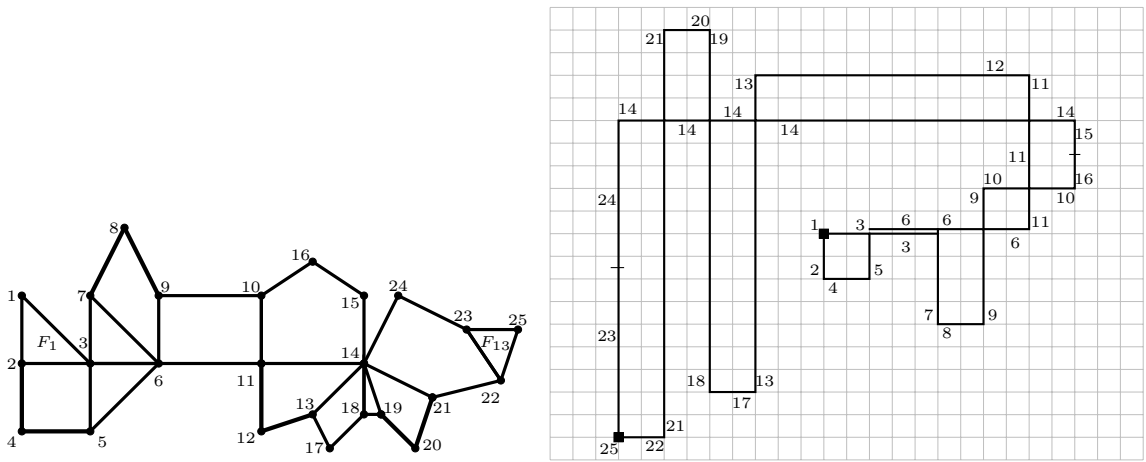
	
	\begin{theorem}
		\label{thm:2Cvpg}
		Every biconnected outerpath is \B0.
	\end{theorem}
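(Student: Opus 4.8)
The plan is to build the representation one face at a time along the weak dual path, strengthening the statement into an inductive invariant that records enough local geometry to glue the next face on. Write the faces as $F_1, \ldots, F_m$ in the order given by the weak-dual path. Because $G$ is biconnected, each $F_i$ is a chordless cycle, and because the weak dual is a path, each internal face meets exactly two internal edges while the two end faces $F_1$ and $F_m$ meet exactly one; let $e_i$ denote the internal edge shared by $F_i$ and $F_{i+1}$. I will draw $G_i := G[V(F_1) \cup \cdots \cup V(F_i)]$ and maintain the invariant that (i) the current drawing is a correct \B0 representation of $G_i$; (ii) the two endpoints of the frontier chord $e_i$ are drawn as one horizontal and one vertical segment meeting at a single corner; and (iii) one of the four axis-aligned quadrants at that corner is an \emph{open free region} $Q$ disjoint from every segment drawn so far, with the two frontier segments having free ends pointing into $Q$. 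Condition (iii) is the engine of the proof: every later face will be drawn entirely inside $Q$, which is precisely what prevents spurious intersections with the already-drawn faces $F_1, \ldots, F_{i-1}$.

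For the base case I draw $F_1$, a cycle $C_k$, as the boundary of a simple rectilinear polygon: consecutive sides are perpendicular segments meeting exactly at the polygon's corners, so each segment meets precisely its two cycle-neighbours and nothing else, and the edge-intersection graph of the boundary is exactly $C_k$. A simple rectilinear polygon has an even number of sides, so when $k$ is odd I replace one convex corner by a pair of collinear, slightly overlapping segments (two horizontal or two vertical segments overlapping in a subinterval); this realizes one cycle edge without a genuine perpendicular turn and fixes the parity. I orient the polygon so that the designated internal edge $e_1$ sits at a corner with a free quadrant, establishing the invariant.

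For the inductive step, suppose $G_i$ is drawn with frontier chord $e_i = ab$ realized as a horizontal segment $s_a$ and a vertical segment $s_b$ meeting at a corner with free quadrant $Q$. The face $F_{i+1}$ is a cycle containing the edge $ab$ and, unless $i+1 = m$, one further internal edge $e_{i+1}$; these one or two marked edges cut the cycle into arcs of new \emph{private} vertices. I route these private vertices as a rectilinear staircase living entirely inside $Q$, starting from the free ends of $s_a$ and $s_b$ and closing up the cycle, again spending at most one collinear overlap to correct parity. I then realize $e_{i+1}$ as a fresh perpendicular corner carrying its own free sub-quadrant, which re-establishes the invariant for $G_{i+1}$. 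Since every new segment lies in $Q$, the new segments meet segments of $G_i$ only along $s_a$ and $s_b$, exactly as the adjacency demands, while adjacency inside $F_{i+1}$ is encoded by the staircase; the outerpath structure guarantees that the private vertices of $F_{i+1}$ are non-adjacent to all earlier private vertices, matching the geometric separation enforced by $Q$.

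The main obstacle is the case analysis hidden in the inductive step: on the cycle $F_{i+1}$ the two internal edges $e_i$ and $e_{i+1}$ may be adjacent or disjoint and may be traversed in either orientation, so the staircase must simultaneously honour the incoming corner configuration of $e_i$, emit the correct outgoing configuration for $e_{i+1}$, respect parity, and stay inside $Q$. Designing a single rectilinear-staircase gadget (with one optional collinear overlap) that covers every such configuration, and verifying that non-neighbouring segments never touch, is where the real work lies; the end face $F_m$ is the easy special case, since it carries no outgoing chord and imposes no free-quadrant requirement. Induction on $m$ then produces a \B0 representation of $G = G_m$, proving Theorem~\ref{thm:2Cvpg}.
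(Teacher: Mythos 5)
Your overall strategy---induction along the weak-dual path with an invariant describing how the frontier edge $e_i$ is drawn---matches the paper's, but your invariant (ii) is strictly too strong, and this is a genuine gap, not merely deferred case analysis. You require the frontier edge to be realized by a horizontal and a vertical segment meeting at a corner. This is impossible to maintain once triangular faces appear. Take the diamond ($K_4$ minus an edge): two triangles $abc$ and $abd$ sharing the edge $ab$; it is a biconnected outerpath. Suppose $s_a$ and $s_b$ are perpendicular segments whose only common point is the corner $p$. Any axis-parallel segment $t$ meeting both must contain $p$: if $t$ is horizontal, then to meet the horizontal one of $s_a,s_b$ it must be collinear with it, and that line meets the other (vertical) segment only at $p$; the vertical case is symmetric. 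Hence $s_c$ and $s_d$ would both contain $p$ and therefore intersect each other, even though $cd$ is not an edge. So after your base case draws $F_1=abc$ with $e_1=ab$ at a perpendicular corner, the second face can never be attached. Indeed, in \emph{every} \B0 representation of the diamond the diagonal $ab$ must be drawn as two collinear overlapping segments (the ``diamond diagonal'' fact the paper cites in its concluding section), so no staircase gadget inside the free quadrant, however clever, can rescue invariant (ii); the same obstruction recurs along any chain of triangular faces.

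The paper's invariant is weakened exactly where yours breaks: an edge is ``extendable'' if its two segments can be prolonged from a common point \emph{either in orthogonal directions or in the same direction} (i.e., collinearly), and in addition the apex of every triangular face is forced to be a point segment. Those two relaxations are what allow the induction to pass through triangles---the paper's Case 2 splits on whether the frontier pair is collinear or orthogonal, and uses the point-segment condition to guarantee that placing the new apex at the shared point creates no spurious intersections. For faces of length at least four your plan is sound and essentially coincides with the paper's Case 1, which also re-establishes an orthogonal corner there. To repair your proof you must enlarge the invariant to admit the collinear (and point-segment) frontier configurations and redesign the attachment gadget for triangles accordingly.
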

	
	\begin{proof}
		
		Let $G$ be a biconnected outerpath with $n$ faces labeled
		$F_1, \ldots, F_n$ such that the weak dual of $G$ is the path $F_1, \ldots,
		F_n$. For each $i \in [n-1]$, the edge shared by $F_i$ and $F_{i+1}$ is
		denoted by $e_i$. For notational convenience, we set $e_n$ to be any
		boundary edge of $F_n$. For each $i \in [n]$, let $G_i$ denote the induced
		subgraph of $G$ restricted to the faces $F_1, \ldots, F_i$.
		
		In a \B0 drawing $D_i$ of $G_i$, we call a non-point horizontal (resp.,
		vertical) line segment $l$ in $D_i$ \emph{extendable} from a point $p \in
		l$ if at least one of the two infinite horizontal (resp., vertical) open
		rays starting at $p$ (but not containing $p$) does not intersect any other
		line segment of $D_i$. A point segment $l$ is said to be \emph{extendable}
		from its location $p$ if it is extendable from $p$ both as a horizontal and
		a vertical line segment.  An edge $xy$ in $G_i$ is said to be
		\emph{extendable} in $D_i$ if the line segments $l_x$ and $l_y$
		representing the vertices $x$ and $y$ are  extendable from a common point
		$p \in l_x \cap l_y$ either in the same direction or in orthogonal
		directions.  Finally a \B0 drawing $D_i$ is said to be \emph{extendable} if $e_i$ is extendable and whenever $F_i$ is a triangle, the vertex of $F_i$ not incident to $e_{i-1}$ is represented by a point segment. 
		
		If $F_1 \cong C_3$, then representing all the
		three vertices as point segments at the same point gives an extendable \B0
		drawing $D_1$ of $G_1$.
		If the length of $F_1$ is $4$ or more, then we can represent $F_1$ as the
		intersection graph of line segments laid out on the boundary of an
		axis-parallel rectangle with the endpoints of $e_1$ being orthogonal (and
		hence sharing only a corner of the rectangle). This is an extendable \B0
		drawing $D_1$ of $G_1$.    
		Let $D_i$, $i < n$, be an extendable \B0 drawing
		of $G_i$. From $D_i$, we construct an extendable \B0 drawing $D_{i+1}$ of
		$G_{i+1}$ as follows.
		
		\paragraph{Case 1 (length of $F_{i+1}$ is 4 or more).}
		Let $F_{i+1} = v_0,\ldots,v_k,v_0$, with $e_i = v_kv_0$ and $e_{i+1} =
		v_jv_{j+1}$, $j < k$. Since $D_i$ is extendable, the edge $v_kv_0$ is
		extendable in $D_i$. Extend the line segments $l_k$ and $l_0$
		(representing $v_k$ and $v_0$ respectively) in orthogonal directions to
		two points $q_k$ and $q_0$ outside of the bounding box of $D_i$. Let
		$q$ be the intersection point of the perpendiculars to $l_k$ and $l_0$
		at $q_k$ and $q_0$ respectively.  Represent the path $v_1, \ldots
		v_{k-1}$ on the two line segments from $q_0$ to $q$ and $q$ to $q_k$ such that
		$v_1$ is represented by a segment containing $q_0$, $v_{k-1}$ by a
		segment containing $q_k$ and $v_j, v_{j+1}$ by orthogonal line
		segments sharing a point. The point shared by these two segments will
		be $q_0$ when $j=0$, $q_k$ when $j=k-1$ and $q$ in all other cases.
		This gives the drawing $D_{i+1}$. 
		It is clear that the new line segments added in this stage do not intersect any other line segments in $D_i$ except $l_0$ and $l_k$. 
		It is easy to verify that the edge
		$e_{i+1} = v_jv_{j+1}$ is extendable.  Hence $D_{i+1}$ is extendable.
		
		\paragraph{Case 2 ($F_{i+1} \cong C_3$).}  
		Let $F_{i+1} = a,b,c,a$, with $e_i = ca$ and $e_{i+1} = ab$.  Since
		$D_i$ is extendable, the edge $ca$ is extendable in $D_i$ from a point
		$p$. If the line segments $l_c$ and $l_a$ are extendable in the same
		direction, then extend them to a point $q$ outside the bounding box of
		$D_i$ and represent $b$ by a point segment $l_b$ at $q$ to obtain
		$D_{i+1}$.  It is easy to check that the line segment $l_a$, the point
		segment $l_b$, and also the edge $ab$ are extendable from $q$ in
		$D_{i+1}$.  Since $ab$ is extendable and $b$ is represented by a point
		segment, $D_{i+1}$ is extendable. If $l_c$ and $l_a$ are extendable
		only in orthogonal directions, then neither of them is a point segment.
		Hence $F_i \not\cong C_3$ and hence the vertices $c$ and $a$ have no
		common neighbor in $G_i$. So the point $p$ is not contained in any line
		segment of $D_i$ other than $l_c$ and $l_a$. Represent $b$ by a point
		segment $l_b$ at $p$ to get $D_{i+1}$.
		In both the subcases, it is clear that the new line segments added in this stage do not intersect any other line segments in $D_i$ except $l_c$ and $l_a$.  
		It is easy to check that the
		line segment $l_a$, the point segment $l_b$, and also the edge $ab$ are
		extendable from $p$ in $D_{i+1}$. Since $ab$ is extendable and $b$ is
		represented by a point segment, $D_{i+1}$ is extendable.
		
		Repeating the above construction $n-1$ times gives a \B0 drawing $D_n$ of
		$G_n = G$.
	\end{proof}

	\B0 is easily seen to be a hereditary graph class (that is, closed under induced subgraphs). Thus it follows from Theorem \ref{thm:2Cvpg} that every induced subgraph of a biconnected outerpath is \B0.
	By the end of the next section, we extend this to every subgraph (not necessarily induced).
	
	\section{Characterization of Linear Outerplanar Graphs}\label{sec:linear} 
		
		In a preliminary version of the is article \cite{jain2022b_0}, we showed that 
		every AT-free outerplanar graph can be identified as an induced subgraph of a biconnected outerpath. 
		After that work, triggered by a question from Mathew C. Francis, we realized that the induced subgraphs of biconnected outerpaths can be  more exotic than AT-free outerplanar graphs, and that this class deserves to be studied on its own. 
		Our definition of \emph{linear outerplanar graphs} in \cite{jain2022b_0} was a technical choice made for the proof.
		That definition was more restrictive than the one here (Definition \ref{def:linear}).
	
	
	
	While the class of linear outerplanar graphs will inherit the rich collection of drawings and geometric representations available for biconnected outerpaths, the structure of a linear outerplanar graph is harder to describe than that of a biconnected outerpath. This section aims to do that. 
	We first build the necessary terminology for stating and proving the characterization theorem (Theorem~\ref{thm:LinearCharacterization}).
	
	Let $v$ be a cutvertex in a graph $G$. 
	For every component of $C$ of $G \setminus v$, the subgraph of $G$ induced on $V(C) \cup \{v\}$ is called a \emph{component incident to $v$}. 
	The set of components incident to $v$ is denoted by $\mathcal{C}_v$.
	A component $C$ incident to $v$ is said to be \emph{incident to a block} $B$ if $v$ is in $B$ and $C$ does not contain $B$.
	
	\begin{definition}
		Let $v$ be a cutvertex in an outerplanar graph $G$ and $C  \in \mathcal{C}_v$.
		We call $C$ \emph{small for $v$} if $C \setminus v$ is a path and \emph{big for $v$} otherwise.
		Further, when $C$ is small for $v$, we call it a \emph{tail at $v$} if $C$ (including $v$) is a path.
	\end{definition}
	\begin{figure}
		
		\newcommand{\colorv}{blue}
		\newcommand{\colorother}{black}
		\newcommand{\candidate}{blue}
		
		\centering
		\begin{subfigure}[b]{0.2\linewidth}
			
			\centering
			\begin{tikzpicture}[scale=.5]
			
			\draw[\colorother, very thick]  (-2,1) -- (-2,3) -- (2,3) -- (2,1)  -- cycle;
			
			\draw[\colorother, very thick] (-2,3) -- (0,1); 
			\draw[\colorother, very thick] (0,1) -- (2,3);
			\draw[\colorother, very thick] (0,1) -- (0,3);
			
			\draw[\colorother, very thick] (0,3) -- (0,4);
			
			\filldraw[\colorother] (0,0) circle (0pt);
			\filldraw[\colorother] (0,1) circle (3pt);
			\filldraw[\colorother] (0,3) circle (3pt);
			\filldraw[\colorother] (0,4) circle (3pt);
			\filldraw[\colorother] (-2,1) circle (3pt);
			\filldraw[\colorother] (-2,3) circle (3pt);
			\filldraw[\colorother] (2,1) circle (3pt);
			\filldraw[\colorother] (2,3) circle (3pt);
			\end{tikzpicture}
		\end{subfigure}
		\begin{subfigure}[b]{0.2\linewidth}
			
			\centering
			\begin{tikzpicture}[scale=.5]
			
			\draw[\colorother, very thick] (-.7,1) -- (-1,2) -- (0,3) -- (1,2) -- (.7,1) -- cycle;
			
			\draw[\colorother, very thick] (-.7,1) -- (-1.7,0); 
			\draw[\colorother, very thick] (.7,1) -- (1.7,0); 
			\draw[\colorother, very thick] (0,3) -- (0,4); 
			
			\draw[\colorother, very thick] (-1,2) -- (-2,3);
			\draw[\colorother, very thick] (1,2) -- (2,3);
			
			\filldraw[\colorother] (-2,3) circle (3pt);
			\filldraw[\colorother] (2,3) circle (3pt);
			
			\filldraw[\colorother] (-.7,1) circle (3pt);
			\filldraw[\colorother] (-1,2) circle (3pt);
			\filldraw[\colorother] (0,3) circle (3pt);
			\filldraw[\colorother] (.7,1) circle (3pt);
			\filldraw[\colorother] (1,2) circle (3pt);
			\filldraw[\colorother] (0,4) circle (3pt);
			\filldraw[\colorother] (1.7,0) circle (3pt);
			\filldraw[\colorother] (-1.7,0) circle (3pt);
			
			
			\end{tikzpicture}
		\end{subfigure}
		\begin{subfigure}[b]{0.2\linewidth}
			
			\centering
			\begin{tikzpicture}[scale=.5]
			
			\draw[\colorother, very thick] (0,0) -- (-1,1) -- (-1,2) -- (0,3) -- (1,2) -- (1,1) -- cycle;
			
			\draw[\colorother, very thick] (-1,1) -- (-2,0); 
			\draw[\colorother, very thick] (1,1) -- (2,0); 
			\draw[\colorother, very thick] (0,3) -- (0,4); 
			
			\filldraw[\colorother] (0,0) circle (3pt);
			\filldraw[\colorother] (-1,1) circle (3pt);
			\filldraw[\colorother] (-1,2) circle (3pt);
			\filldraw[\colorother] (0,3) circle (3pt);
			\filldraw[\colorother] (1,1) circle (3pt);
			\filldraw[\colorother] (1,2) circle (3pt);
			\filldraw[\colorother] (0,4) circle (3pt);
			\filldraw[\colorother] (2,0) circle (3pt);
			\filldraw[\colorother] (-2,0) circle (3pt);
			
			\end{tikzpicture}
		\end{subfigure}
		\begin{subfigure}[b]{0.2\linewidth}
			
			\centering
			\begin{tikzpicture}[scale=.5]
			
			\draw[\colorother, very thick] (0,0) -- (-2,0) -- (-2,2) -- (0,2) -- cycle;
			
			\draw[\colorother, very thick] (0,0) -- (1,-1); 
			\draw[\colorother, very thick] (0,0) -- (0,-1); 
			\draw[\colorother, very thick] (-2,0) -- (-3,-1); 
			\draw[\colorother, very thick] (-2,0) -- (-2,-1);
			\draw[\colorother, very thick] (-2,2) -- (-3,3);
			\draw[\colorother, very thick] (0,2) -- (0,3);
			\draw[\colorother, very thick] (0,2) -- (1,3);

			\filldraw[\colorother] (0,0) circle (3pt);
			\filldraw[\colorother] (-2,0) circle (3pt);
			\filldraw[\colorother] (-2,2) circle (3pt);
			\filldraw[\colorother] (0,2) circle (3pt);
			\filldraw[\colorother] (0,-1) circle (3pt);
			\filldraw[\colorother] (1,-1) circle (3pt);
			\filldraw[\colorother] (-2,-1) circle (3pt);
			\filldraw[\colorother] (-3,-1) circle (3pt);
			\filldraw[\colorother] (-3,3) circle (3pt);
			\filldraw[\colorother] (0,3) circle (3pt);
			\filldraw[\colorother] (1,3) circle (3pt);
			
			
			\end{tikzpicture}
		\end{subfigure}
		\begin{subfigure}[b]{0.15\linewidth}
			
			\centering
			\begin{tikzpicture}[scale=.5]
			
			\draw[\colorother, very thick] (0,0) -- (-2,0) -- (-2,2) -- (0,2) -- cycle;
			
			\draw[\colorother, very thick] (0,0) -- (1,-1); 
			\draw[\colorother, very thick] (0,0) -- (0,-1); 
			\draw[\colorother, very thick] (-2,0) -- (-3,-1); 
			\draw[\colorother, very thick] (-2,0) -- (-2,-1);
			
			\draw[\colorother, very thick] (-2,2) -- (-1,3) -- (0,2) -- cycle;

			\filldraw[\colorother] (0,0) circle (3pt);
			\filldraw[\colorother] (-2,0) circle (3pt);
			\filldraw[\colorother] (-2,2) circle (3pt);
			\filldraw[\colorother] (0,2) circle (3pt);
			\filldraw[\colorother] (0,-1) circle (3pt);
			\filldraw[\colorother] (1,-1) circle (3pt);
			\filldraw[\colorother] (-2,-1) circle (3pt);
			\filldraw[\colorother] (-3,-1) circle (3pt);
			\filldraw[\colorother] (-1,3) circle (3pt);
			
			
			\end{tikzpicture}
		\end{subfigure}
		\begin{subfigure}[b]{0.1\linewidth}
			\begin{tikzpicture}[scale=.5]
			\node (p1) at (0,0) {(a)};
			\end{tikzpicture}
		\end{subfigure}
		\begin{subfigure}[b]{0.9\linewidth}
			
			\centering
			\begin{tikzpicture}[scale=.6]
			
			\draw[\colorother, very thick] (0,0) -- (-3,0) -- (-3,2) -- cycle;
			\draw[\colorv, very thick] (-3,2) -- (-3,0) -- (-5,0) -- cycle;

			\draw[\colorother, very thick] (0,0) -- (4,0); 
			\draw[\colorother, very thick] (4,0) -- (5,1);
			\draw[\colorother, very thick] (5,1) -- (7,1);
			
			\draw[\colorother, very thick] (4,0) -- (5,-1);
			\draw[\colorother, very thick] (5,-1) -- (7,-1);
			\draw[\colorother, very thick] (7,-1) -- (8,-1.5);	
			
			\draw[\colorother, very thick] (0,0) -- (0,1);
			\draw[\colorv, very thick] (0,1) -- (-1,2);
			\draw[\colorv, very thick] (-1,2) -- (-1.5,3);
			\draw[\colorv, very thick] (-1,2) -- (-.5,3);	
			
			\draw[\colorother, very thick] (0,0) -- (1,-1) -- (0,-2) -- (-1,-1) -- cycle;	
			\draw[\colorv, very thick] (0,-2) -- (-1,-1);
			\draw[\colorv, very thick] (0,-2) -- (1,-1);
			\draw[\colorv, very thick] (0,-2) -- (0,-3.5);
			
			\draw[\colorother, very thick] (0,0) -- (3,-1);
			\draw[\colorv, very thick] (3,-1) -- (4,-2) -- (2,-2) -- cycle;
			
			\draw[\colorother, very thick] (0,0) -- (1,2) -- (2,2.5) -- (3,2) -- (3.5,1) -- cycle;
			\draw[\colorother, very thick] (3,2) -- (0,0);
			
			\draw[\colorother, very thick] (1,2) -- (.5,3);
			\draw[\colorother, very thick] (.5,3) -- (1.5,3.5);
			
			\draw[\colorother, very thick] (3.5,1) -- (4.5,2);
			\draw[\colorother, very thick] (4.5,2) -- (5.5,2);
			\draw[\colorother, very thick] (5.5,2) -- (6.5,3);
			
			\node (p1) at (0,-.5) {$\textbf{v}$};
			
			\filldraw[\colorother] (0,0) circle (3pt);
			\filldraw[\colorv] (-3,0) circle (3pt);
			\filldraw[\colorv] (-3,2) circle (3pt);
			\filldraw[\colorv] (-5,0) circle (3pt);
			\filldraw[\colorother] (4,0) circle (3pt);
			\filldraw[\colorother] (5,1) circle (3pt);
			\filldraw[\colorother] (7,1) circle (3pt);
			\filldraw[\colorother] (5,-1) circle (3pt);
			\filldraw[\colorother] (7,-1) circle (3pt);
			\filldraw[\colorother] (8,-1.5) circle (3pt);
			\filldraw[\colorother] (0,1) circle (3pt);
			\filldraw[\candidate] (-1,2) circle (5pt);
			\filldraw[\colorother] (-.5,3) circle (3pt);
			\filldraw[\colorother] (-1.5,3) circle (3pt);
			
			\filldraw[\colorother] (1,-1) circle (3pt);
			\filldraw[\candidate] (0,-2) circle (5pt);
			\filldraw[\colorother] (-1,-1) circle (3pt);
			
			\filldraw[\colorother] (0,-3.5) circle (3pt);
			
			\filldraw[\colorv] (3,-1) circle (3pt);
			\filldraw[\colorv] (4,-2) circle (3pt);
			\filldraw[\colorv] (2,-2) circle (3pt);

			\filldraw[\colorother] (1,2) circle (3pt);
			\filldraw[\colorother] (2,2.5) circle (3pt);
			\filldraw[\colorother] (3,2) circle (3pt);
			\filldraw[\colorother] (3.5,1) circle (3pt);
			\filldraw[\colorother] (.5,3) circle (3pt);
			\filldraw[\colorother] (1.5,3.5) circle (3pt);
			\filldraw[\colorother] (4.5,2) circle (3pt);
			\filldraw[\colorother] (5.5,2) circle (3pt);
			\filldraw[\colorother] (6.5,3) circle (3pt);
			
			\node (p1) at (1,-4.5) {(b)};
			\end{tikzpicture}
		\end{subfigure}
		
		\caption{Examples of outerplanar graphs which are (a) not block-safe (b) not cut-safe. In (b), $\mathcal{C}_v$ has four big components.}
		\label{fig:bigEg}
	\end{figure}
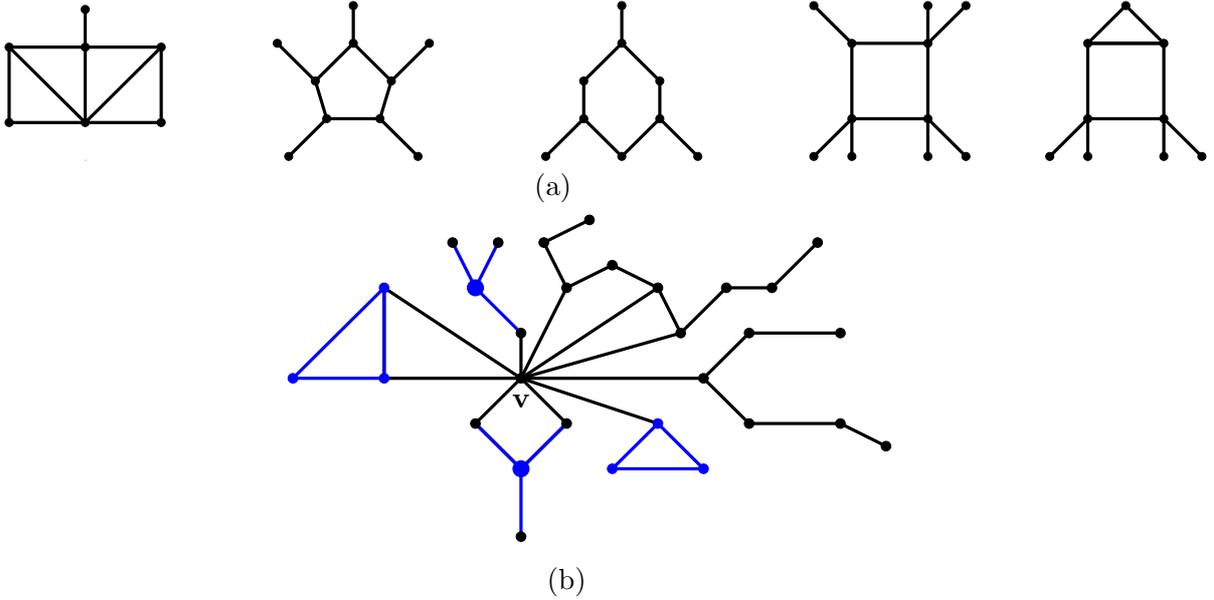
	\begin{definition}[Cut-safety]
		\label{def:cutsafe}
		A cutvertex $v$ in an outerplanar graph $G$ is said to be \emph{safe} if $\mathcal{C}_v$ contains at most two big components. The graph $G$ is said to be \emph{cut-safe} if every cutvertex in $G$ is safe.
	\end{definition}

	A set of at most two boundary edges of a block $B$ in an outerplanar graph is called \emph{antipodal} either when $B$ is a single face or when the edges belong to different leaf faces of $B$.
	
	\begin{definition}[Block-safety]
		\label{def:blocksafe}
		A nontrivial block $B$ in an outerplanar graph is called \emph{safe} if there exist two antipodal edges $a_0b_0$ and $a_1b_1$ in $B$ and the set of components incident to $B$ can be partitioned into $\mathcal{C}_0$ and $\mathcal{C}_1$ such that
		\begin{enumerate}
			\item every component in $\mathcal{C}_i$ ($i \in \{0,1\}$) is incident to either $a_i$ or $b_i$, and
			\item at most one component in $\mathcal{C}_i$ ($i \in \{0,1\}$) is incident to $a_i$ and it (if present) is a tail. 
		\end{enumerate}
		
		An outerplanar graph $G$ is said to be \emph{block-safe} if every nontrivial block in $G$ is safe. 
		The edges $a_0b_0$ and $a_1b_1$ are called \emph{terminal edges} of $B$ in $G$. A terminal edge is denoted as an ordered pair $(x,y)$ where $x=a_i$ and $y=b_i$.
		The components in $\mathcal{C}_i$ are said to be \emph{associated to} the terminal edge $(a_i,b_i)$.
	\end{definition}

	\begin{theorem}[Characterization]\label{thm:LinearCharacterization}
		An outerplanar graph $G$ is linear if and only if $G$ is cut-safe, block-safe and the weak dual of $G$ is a linear forest. Moreover, if $G$ is linear, then it can be realized both as an induced subgraph and as a spanning subgraph of (different) biconnected outerpaths.
	\end{theorem}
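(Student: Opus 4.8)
The plan is to first reduce condition (3) to a purely block-local statement, and then treat the two implications separately, with the sufficiency (``if'') direction carrying essentially all of the constructive weight.

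\medskip
\noindent\textbf{Reducing the weak-dual condition.} I would begin by recording that every bounded face of an outerplanar graph lies inside a unique block, and two faces can be dual-adjacent only if they share an edge inside a common block. Hence $\mathcal{T}_G$ is the disjoint union of the weak duals of the nontrivial blocks of $G$, and trivial blocks contribute nothing. Consequently ``$\mathcal{T}_G$ is a linear forest'' is \emph{equivalent} to ``every nontrivial block of $G$ is itself a biconnected outerpath.'' This lets condition (3) be used block-by-block throughout.

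\medskip
\noindent\textbf{Necessity.} Assume $G$ is isomorphic to a subgraph of a biconnected outerpath $H$ with faces $F_1,\dots,F_n$ forming a path in the weak dual. The key structural fact is that in a biconnected outerpath each vertex lies on a \emph{contiguous} interval of the faces $F_1,\dots,F_n$, so the whole object carries a natural left-to-right linear order. For (3), any biconnected subgraph of $G$ (hence of $H$) inherits this linear face order and so cannot branch in its weak dual; thus every nontrivial block of $G$ is an outerpath. For cut-safety, a cutvertex $v$ of $G$ sits in the interval of faces of $H$ incident to $v$, and anything glued to $v$ that is not merely a path must occupy genuine faces lying either before or after that interval; there are only two such sides, so at most two components can be big. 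Block-safety is read off in the same spirit: the two ends of a block's linear face order are its (at most two) leaf faces, the only places from which another big branch can emanate, which forces exactly the antipodal-edge-and-tail structure of Definition~\ref{def:blocksafe}. The care here is in tracking how the faces of $H$ incident to $v$ distribute the components of $G\setminus v$.

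\medskip
\noindent\textbf{Sufficiency and the two realizations.} Conversely, assume $G$ is cut-safe, block-safe, and every nontrivial block is an outerpath. I would construct a biconnected outerpath $H\supseteq G$ by threading all of $G$ along a single path of faces. Walking the block-cut tree, cut-safety (at most two big components per cutvertex) and block-safety (big components attach at antipodal leaf faces, all other attachments being tails) together guarantee that the big blocks and big components can be routed into one spine, while each small component, being a path, can be absorbed into the boundary of a single face. I would process the blocks along this spine one at a time, maintaining an outerpath built so far with a designated free boundary edge at its growing end (mirroring the extendable-edge bookkeeping in the proof of Theorem~\ref{thm:2Cvpg}): a new block is appended at its terminal edge so that its own linear face order continues the spine, and tails and small paths are hung on intermediate faces. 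For the ``moreover,'' the same combinatorial skeleton is completed in two ways: for a \emph{spanning} realization we only chord the polygonal faces so that the result is biconnected with a path weak dual, adding no vertices; for an \emph{induced} realization we instead insert fresh vertices wherever the layout would otherwise force an edge between two vertices of $G$, so that $H[V(G)]=G$. The main obstacle I expect is precisely this bookkeeping: verifying that the attachments dictated by the antipodal terminal edges and the at-most-two-big rule can always be realized consistently around one linear spine—so that small components and tails never induce a branch in the weak dual and never collide—and that the resulting face structure is genuinely a \emph{path} rather than merely a tree at every attachment point. Showing both completions ride on the same skeleton is the final technical point.
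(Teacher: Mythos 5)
Your high-level architecture matches the paper's: necessity is derived from the linear structure of a biconnected outerpath, sufficiency by augmenting $G$ to a biconnected outerpath, and the ``moreover'' by realizing each added connection either as a single edge (spanning) or as a two-length path through a new vertex (induced) --- this last dichotomy is exactly the paper's notion of a connector. The genuine gaps are in the steps that carry the difficulty. In the necessity direction, your cut-safety argument rests on ``anything glued to $v$ that is not merely a path must occupy genuine faces lying either before or after that interval; there are only two such sides, so at most two components can be big.'' Two things are unproved here: (i) that a big component forces structure of the supergraph away from $v$ at all --- the paper gets this by noting that a connected non-path contains a cycle or a $3^+$-vertex, either of which yields an internal edge of the supergraph with both endpoints inside $C \setminus v$; and (ii) more importantly, that two big components cannot lie on the same side. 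That second point is the crux, and it needs a separation statement --- the paper isolates it as Observation~\ref{obs:threeInternalEdges} (among any three internal edges of a biconnected outerpath, the endpoints of one separate the other two) --- combined with the fact that any two of the big components are joined through $v$ by a path avoiding the third; ``only two sides'' by itself proves nothing. Your block-safety sentence is thinner still: the delicate part of Definition~\ref{def:blocksafe} is the tail condition (at most one component attached at $a_i$, and it must be a tail), which the paper extracts by passing to an \emph{edge-minimal} biconnected outerpath supergraph, analyzing the leaf face hanging off each antipodal edge, and using Lemma~\ref{lem:internalEdgeinFace} to place every cutvertex of $B$ on one of those edges. Nothing in your sketch produces the tail condition.

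In the sufficiency direction you explicitly defer the central difficulty (``the main obstacle I expect is precisely this bookkeeping''), but that bookkeeping \emph{is} the proof. The paper resolves it with two local operations rather than one global sweep: first every maximal small component is tucked into a block at a terminal edge (Action~\ref{action:tuckTail}), then two components incident to a cutvertex are repeatedly bonded, prioritizing small components over big ones (Action~\ref{action:siblingBond}); after each operation one verifies that cut-safety, block-safety and linearity of the weak dual are preserved, that the designated terminal edges of the merged block remain antipodal, and that the number of blocks strictly decreases, so the process terminates in a biconnected outerpath containing $G$ and the same construction serves both realizations depending on the choice of connectors. Your spine-sweep could plausibly be reorganized along these lines, but as written it states no invariant and proves no preservation, so the claim that the final weak dual is genuinely a path --- and that tails can always be ``hung on intermediate faces'' without creating a branch in the dual or an unwanted adjacency between vertices of $G$ --- is precisely what remains to be shown.
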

	
	\begin{remark}
			Note that the class of linear outerplanar graphs and outerpaths are incomparable. There are outerpaths which are not linear (cf. Figure \ref{fig:bigEg}.(a)) and linear outerplanar graphs which are not outerpaths (cf. Figure \ref{fig:2connect}.(a)).
	\end{remark} 
	
	\subsection{Proof of Theorem~\ref{thm:LinearCharacterization} (Necessity)}

	We first prove that if an outerplanar graph $G$ is linear, then $G$ is cut-safe, block-safe and $\mathcal{T}_G$ is a linear forest. Our strategy is to look at the edges of $G$ which are forced to be internal edges in any biconnected outerplanar supergraph of $G$ and then use the fact that the internal edges in a biconnected outerpath have a natural linear order. 
	It is easy to see that every internal edge of $G$, at least one edge in each face of $G$ (unless $G$ itself is a cycle), and all but at most two edges incident to any vertex of $G$ will all be internal edges in any biconnected outerplanar supergraph of $G$. 
	We can say a bit more about edges incident to a cutvertex in a nontrivial block (Lemma~\ref{lem:internalEdgeinFace}) using the simple observation below. 
	The extension of this simple observation to biconnected outerpaths  (Observation~\ref{obs:threeInternalEdges}) is a key to rest of this section.
	
	\begin{observation}\label{obs:internalEdgeRemoval}
		If $uv$ is an internal edge in a biconnected outerplanar graph $G$, then $G \setminus \{u,v\}$ has exactly two components. Moreover, both these components contain exactly one boundary neighbor each of $u$ and $v$.
	\end{observation}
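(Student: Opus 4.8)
The plan is to reduce everything to the standard structural fact that in a biconnected outerplanar graph the boundary of the outer face is a Hamiltonian cycle, and that the internal edges are precisely the chords of this cycle. So I would first fix an outerplanar embedding of $G$ and let $C = x_0 x_1 \cdots x_{n-1} x_0$ be the cycle bounding the outer face. By construction the boundary neighbors of each $x_i$ are exactly its two cycle-neighbors $x_{i-1}$ and $x_{i+1}$ (indices mod $n$), and the boundary edges are exactly the edges of $C$; hence every internal edge is a chord. Writing $u = x_a$ and $v = x_b$, the hypothesis that $uv$ is internal (not a boundary edge) means $u$ and $v$ are non-consecutive on $C$, so the two open arcs $A = x_{a+1}, \ldots, x_{b-1}$ and $B = x_{b+1}, \ldots, x_{a-1}$ of $C$ cut off by $\{u,v\}$ are both nonempty.

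Next I would argue that the vertex sets $A$ and $B$ are exactly the two components of $G \setminus \{u,v\}$. Each of $A$ and $B$ induces a connected subgraph, since consecutive vertices along $C$ remain adjacent after deleting $u$ and $v$. It therefore suffices to show that no edge of $G$ joins a vertex of $A$ to a vertex of $B$. This is the one point that is not purely combinatorial, and I expect it to be the main obstacle: I would handle it by planarity. Any edge $x_i x_j$ with $x_i \in A$ and $x_j \in B$ is a chord of $C$ whose endpoints interleave on $C$ with $u$ and $v$, so in the embedding it would have to cross the chord $uv$ inside the disk bounded by $C$; this contradicts the fact that the embedding has no crossing edges. (Equivalently, one invokes the standard fact that in an outerplanar embedding two chords cross precisely when their endpoints alternate along the boundary cycle.) Hence $A$ and $B$ lie in distinct components, and since each is connected and nonempty, $G \setminus \{u,v\}$ has exactly two components.

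Finally, for the ``moreover'' statement I would use that each vertex of $C$ has exactly two boundary neighbors, namely its two cycle-neighbors. The boundary neighbors of $u = x_a$ are $x_{a+1} \in A$ and $x_{a-1} \in B$, while those of $v = x_b$ are $x_{b-1} \in A$ and $x_{b+1} \in B$. Thus the component with vertex set $A$ contains exactly one boundary neighbor of $u$ (namely $x_{a+1}$) and exactly one boundary neighbor of $v$ (namely $x_{b-1}$), and symmetrically for the component $B$, which completes the argument. The only facts I rely on beyond the definitions in the excerpt are the Hamiltonicity of the outer boundary of a biconnected outerplanar graph and the non-crossing property of the embedding; both are standard, so the write-up should be short once the interleaving/crossing step is stated carefully.
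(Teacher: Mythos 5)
Your proof is correct. The paper states this result as an observation and gives no proof at all, and your argument --- the outer face of a biconnected outerplanar graph is a Hamiltonian cycle, internal edges are chords, the two arcs cut off by $\{u,v\}$ are connected and cannot be joined by an edge since interleaving chords would cross --- is exactly the standard reasoning the authors implicitly rely on, including the placement of the boundary neighbors $x_{a\pm1}, x_{b\pm1}$ for the ``moreover'' part.
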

	
	\begin{lemma}\label{lem:internalEdgeinFace}
		If an outerplanar graph $G$ is an induced subgraph of a biconnected outerplanar graph $G^\prime$ then for any cutvertex $v$ in a nontrivial block $B$ of $G$, one of the two boundary edges incident to $v$ in $B$ is an internal edge in $G^\prime$.
	\end{lemma}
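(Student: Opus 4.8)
The plan is to argue by contradiction, using Observation~\ref{obs:internalEdgeRemoval} as the main tool. Let $a_0$ and $a_1$ be the two (distinct) boundary neighbors of $v$ in the nontrivial block $B$, so that $va_0$ and $va_1$ are precisely the two boundary edges of $v$ in $B$; these are edges of $G$ and hence of $G'$. Since $B$ is biconnected, $B \setminus v$ is connected and contains both $a_0$ and $a_1$. Because $v$ is a cutvertex, $G \setminus v$ has a component $C$ not containing $B \setminus v$; I would fix a neighbor $w$ of $v$ lying in $C$ (such a $w$ exists since every component of $G \setminus v$ is joined to $v$). Writing $C_B$ for the component of $G \setminus v$ that contains $B \setminus v$, we have $a_0, a_1 \in C_B$ and $w \in C$, so in particular $w \notin \{a_0, a_1\}$ and $w \notin C_B$.

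For the contradiction, suppose neither $va_0$ nor $va_1$ is internal in $G'$, i.e.\ both are boundary edges of $G'$. As $G'$ is biconnected outerplanar, $v$ has exactly two boundary neighbors in $G'$, and since $a_0 \neq a_1$ these must be exactly $a_0$ and $a_1$. The edge $vw$ belongs to $G \subseteq G'$, and because $w \notin \{a_0, a_1\}$ it is not a boundary edge of $G'$; hence $vw$ is an internal edge of $G'$. Applying Observation~\ref{obs:internalEdgeRemoval} to $vw$, the graph $G' \setminus \{v, w\}$ has exactly two components, each containing exactly one boundary neighbor of $v$; thus $a_0$ and $a_1$ lie in \emph{different} components of $G' \setminus \{v, w\}$. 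On the other hand, $C_B$ is connected in $G$ and therefore spans a connected subgraph of the supergraph $G'$; since $C_B$ avoids both $v$ and $w$, it lies entirely inside $G' \setminus \{v, w\}$, forcing $a_0$ and $a_1$ (both in $C_B$) into the \emph{same} component. This contradiction shows that at least one of $va_0, va_1$ is internal in $G'$.

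The only step that needs genuine care is lining up the conclusion of Observation~\ref{obs:internalEdgeRemoval} (the deletion of an internal edge splits a biconnected outerplanar graph into two parts separating the edge's boundary neighbors) against the connectivity of $B \setminus v$; the whole argument hinges on producing the outside neighbor $w$ so that some chord at $v$ is forced to be internal. It is worth noting that the proof in fact uses only the subgraph relation $G \subseteq G'$ and the intrinsic block/cutvertex structure of $G$, so the hypothesis that $G$ is \emph{induced} in $G'$ is more than the argument requires.
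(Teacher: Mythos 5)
Your proof is correct and takes essentially the same approach as the paper's: assume both boundary edges of $B$ at $v$ remain boundary edges of $G'$, conclude that the edge from $v$ to a neighbor outside $B$ must be internal, and then contradict Observation~\ref{obs:internalEdgeRemoval} via the connectivity of the part of $G \setminus v$ containing $B \setminus v$. Your closing remark is also accurate: the paper's own argument likewise uses only the subgraph relation, so the ``induced'' hypothesis is not needed for this lemma.
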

	\begin{proof}
		Let $uv$ and $vw$ be the two boundary edges of $B$ incident to $v$ and let $x$ be a neighbor of $v$ outside $B$ in $G$.
		If both $uv$ and $vw$ are boundary edges in $G^\prime$, then $vx$ is an internal edge in $G^\prime$. But $u$ and $w$ are in the same component of $G \setminus \{v, x\}$ and hence of its supergraph $G^\prime \setminus \{v, x\}$. This contradicts Observation~\ref{obs:internalEdgeRemoval}.
	\end{proof} 
	
	For any three subsets $X, Y, Z$ of the vertices of a graph, $X$ \emph{separates} $Y$ and $Z$ if every path between $Y$ and $Z$ contains a vertex from $X$. 
	
	\begin{observation}\label{obs:threeInternalEdges}
		Let $u_0v_0, u_1v_1, u_2v_2$ be three distinct (but not necessarily disjoint) internal edges of a biconnected outerpath $G$.
		Then the endpoints of one of them, say $\{u_i, v_i\}$, separate $\{u_j, v_j\}$ from $\{u_k, v_k\}$ in $G$, where $\{i, j, k\} = \{0, 1,2\}$.
	\end{observation}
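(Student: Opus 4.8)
The plan is to exploit the \emph{natural linear order} on the internal edges of a biconnected outerpath. Since the weak dual is a path $F_1, \dots, F_n$ and every internal edge is the unique edge shared by two consecutive faces $F_i$ and $F_{i+1}$, the internal edges are exactly $e_1, \dots, e_{n-1}$, and I may index each of the three given edges by its position in this order. As the edges are distinct their positions are distinct, so after relabelling I will assume they occur in the order $e_a, e_b, e_c$ with $a < b < c$. The claim I then want to prove is that the endpoints of the \emph{middle} edge $e_b = u_bv_b$ separate $\{u_a,v_a\}$ from $\{u_c,v_c\}$.

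The core of the argument is to understand $G \setminus \{u_b, v_b\}$. Let $H_b$ be the subgraph of $G$ induced by the vertices lying on the faces $F_1, \dots, F_b$ and let $H_b'$ be the subgraph induced by the vertices on $F_{b+1}, \dots, F_n$; since $e_b$ is the only edge shared between the two sides, these two vertex sets overlap exactly in $\{u_b, v_b\}$. By Observation~\ref{obs:internalEdgeRemoval}, $G \setminus \{u_b, v_b\}$ has exactly two components, and I will argue that they are precisely $L := V(H_b) \setminus \{u_b, v_b\}$ and $R := V(H_b') \setminus \{u_b, v_b\}$. Granting this, the proof finishes quickly: the edge $e_a$ lies on the faces $F_a, F_{a+1}$ with $a+1 \le b$, so $\{u_a, v_a\} \subseteq V(H_b) = L \cup \{u_b, v_b\}$, and symmetrically $\{u_c, v_c\} \subseteq R \cup \{u_b, v_b\}$. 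Hence any path from $\{u_a, v_a\}$ to $\{u_c, v_c\}$ either already starts or ends at a vertex of $\{u_b, v_b\}$, or runs from a vertex of $L$ to a vertex of $R$ and must therefore pass through $\{u_b, v_b\}$, since $L$ and $R$ are distinct components of $G \setminus \{u_b, v_b\}$.

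The step I expect to be the main obstacle is verifying that $L$ and $R$ really are the two components, that is, that no edge of $G$ joins a vertex of $L$ to a vertex of $R$. This is where outerplanarity enters: the edge $e_b$ is a chord of the outer cycle, and because chords of an outerplanar embedding do not cross and the outer cycle meets the two sides of $e_b$ only at $u_b$ and $v_b$, no edge can straddle the chord. Thus every edge of $G \setminus \{u_b, v_b\}$ stays within $L$ or within $R$, so $L$ and $R$ are unions of components with no edge between them; combined with Observation~\ref{obs:internalEdgeRemoval} (exactly two components) and the fact that each side contains a vertex other than $u_b, v_b$ (as $F_b$ and $F_{b+1}$ each have at least three vertices), this pins down $L$ and $R$ as the two components.

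One minor subtlety worth flagging is that the three edges need not be disjoint, so $\{u_a, v_a\}$ or $\{u_c, v_c\}$ may meet $\{u_b, v_b\}$; for instance, in a fan all internal edges share a common vertex. The separation claim is unaffected, since a path witnessing connection between the two outer edge-pairs that begins or ends at a shared vertex trivially contains a vertex of the separator. This is exactly why the first alternative is retained in the path analysis above, and it is the only place where the non-disjointness of the edges needs a moment's care.
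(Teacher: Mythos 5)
Your proof is correct and matches the approach the paper has in mind: the paper states this as an unproved observation, relying on the remark that ``the internal edges in a biconnected outerpath have a natural linear order,'' which is exactly the ordering $e_1,\ldots,e_{n-1}$ along the weak dual path that you set up, followed by the argument that the middle edge's endpoints separate the two sides. Your fleshed-out verification --- that the vertex sets of $F_1,\ldots,F_b$ and $F_{b+1},\ldots,F_n$ meet only in $\{u_b,v_b\}$, that planarity forbids any edge straddling the chord $e_b$, and the careful handling of non-disjoint edges --- fills in precisely the details the paper leaves to the reader, with no gaps.
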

	
	\begin{lemma}\label{lemmaNecessityLinearForest}
		If an outerplanar graph $G$ is linear, then $\mathcal{T}_G$ is a linear forest. 
	\end{lemma}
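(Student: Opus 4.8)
The plan is to show that the weak dual $\mathcal{T}_G$ has maximum degree at most two, which, since $\mathcal{T}_G$ is already a forest, is exactly the statement that every component is a path. So I would argue by contradiction: assume some bounded face $F$ of $G$ is a node of degree at least three in $\mathcal{T}_G$. Such a node shares an internal edge with at least three distinct faces, so $F$ carries three distinct internal edges $e_0=u_0v_0$, $e_1=u_1v_1$, $e_2=u_2v_2$ on its boundary. The first structural fact I would record is that the boundary of a bounded face of an outerplanar graph is a cycle $C_F$: every bounded face lies inside a single nontrivial (hence $2$-connected) block, since a bridge is incident only to the outer face and cannot bound an inner region, and in a $2$-connected plane graph every face is bounded by a cycle. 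Thus $e_0,e_1,e_2$ are three distinct edges of the cycle $C_F$.

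Since $G$ is linear, I would fix a biconnected outerpath $G'$ with $G$ a subgraph of $G'$. Because $G'$ is a biconnected outerplanar supergraph of $G$, the internal edges of $G$ remain internal edges of $G'$ (the elementary fact recorded just before Lemma~\ref{lem:internalEdgeinFace}); in particular $e_0,e_1,e_2$ are three distinct internal edges of the biconnected outerpath $G'$, so Observation~\ref{obs:threeInternalEdges} applies to them. The heart of the argument is then to contradict that observation by showing that \emph{none} of the three edges separates the other two. Fix $i$ and delete the two adjacent endpoints $u_i,v_i$ of $e_i$ from the cycle $C_F$; what remains is a single path $P$ through all the other vertices of $C_F$. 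Since $e_j$ and $e_k$ are distinct from $e_i$, each shares at most one vertex with $e_i$, so each of $e_j,e_k$ has an endpoint lying on $P$. Hence $P$ provides a path, lying entirely in $G$ and avoiding $\{u_i,v_i\}$, between an endpoint of $e_j$ and an endpoint of $e_k$. As $E(G)\subseteq E(G')$, this is also a path in $G'$, so $\{u_i,v_i\}$ does not separate $\{u_j,v_j\}$ from $\{u_k,v_k\}$ in $G'$. Running this for each $i\in\{0,1,2\}$ contradicts Observation~\ref{obs:threeInternalEdges}, completing the proof.

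The step I expect to require the most care is the structural claim that the boundary of $F$ is a genuine cycle $C_F$ carrying all three internal edges, since this is what lets me turn ``remove two adjacent vertices of a cycle'' into ``the rest stays connected''; I would justify it through the block decomposition together with the fact that a bridge never bounds an inner face. The only other point to check is the degenerate case in which $C_F$ is a triangle and all three $e_i$ are its edges: then deleting $\{u_i,v_i\}$ leaves a single vertex that is a common endpoint of $e_j$ and $e_k$, so $\{u_j,v_j\}$ and $\{u_k,v_k\}$ already intersect and are trivially not separated, which is consistent with the argument above. Everything else, namely the preservation of internality and the transfer of paths from $G$ to the supergraph $G'$, is immediate.
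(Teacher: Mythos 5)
Your proof is correct and follows essentially the same route as the paper's: reduce non-linearity of $\mathcal{T}_G$ to a face of $G$ with three internal edges, note these stay internal in the biconnected outerpath supergraph $G'$, and contradict Observation~\ref{obs:threeInternalEdges}. The only difference is that you spell out what the paper leaves implicit—namely that the face boundary is a cycle, so deleting the endpoints of any one of the three edges leaves a path witnessing that it separates nothing—which is a welcome elaboration but not a different argument.
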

	\begin{proof}
		Let $G$ be a subgraph of a biconnected outerpath $G^\prime$. 
		If $\mathcal{T}_G$ is not a linear forest, 
		then $G$ has a face $f$ with at least three internal edges.
		These remain internal edges in $G^\prime$ that violate the separation property in Observation~\ref{obs:threeInternalEdges}.
	\end{proof}
	
	\begin{lemma}\label{obs:ATinNonLinear1}
		If an outerplanar graph $G$ is linear, then $G$ is cut-safe.
	\end{lemma}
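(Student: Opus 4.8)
The plan is to argue by contradiction, exploiting the separation property of internal edges of a biconnected outerpath recorded in Observation~\ref{obs:threeInternalEdges}. Suppose $G$ is linear, so that $G$ is (isomorphic to) a subgraph of a biconnected outerpath $G'$, and suppose for contradiction that some cutvertex $v$ of $G$ is \emph{not} safe. By Definition~\ref{def:cutsafe} this means $\mathcal{C}_v$ contains at least three big components; fix three of them, $C_1, C_2, C_3$. Since $V(C_1)\setminus\{v\}, V(C_2)\setminus\{v\}, V(C_3)\setminus\{v\}$ are the vertex sets of three distinct components of $G\setminus v$, they are pairwise disjoint.

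First I would show that each big component hides a \emph{forced} internal edge away from $v$: for every $i$ there is an edge $f_i$ that is internal in $G'$ and whose two endpoints both lie in $V(C_i)\setminus\{v\}$. Because $C_i$ is big, the graph $C_i\setminus v$ is connected but is not a path, so it either has a vertex $x$ of degree at least three or, all its degrees being at most two, is a cycle. In the first case $x$ has degree at least three in $G$ with all of its neighbours lying in $C_i\setminus v$; since all but at most two of the edges incident to $x$ are forced to be internal in any biconnected outerplanar supergraph of $G$, at least one such internal edge $f_i=xw$ with $w\in C_i\setminus v$ remains, and $f_i\subseteq V(C_i)\setminus\{v\}$. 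In the second case $C_i\setminus v$ is a chordless cycle (a chord would create a degree-three vertex), so it bounds a face of $G$; as $G$ is not itself a cycle, at least one edge $f_i$ of that face is internal in $G'$, and both its endpoints lie on the cycle, hence in $C_i\setminus v$. In either case $f_i\subseteq V(C_i)\setminus\{v\}$.

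The three edges $f_1, f_2, f_3$ are then distinct (indeed vertex-disjoint) internal edges of the biconnected outerpath $G'$, so Observation~\ref{obs:threeInternalEdges} applies: after relabelling so that $\{i,j,k\}=\{1,2,3\}$, the endpoints of $f_i$ separate the endpoints of $f_j$ from those of $f_k$ in $G'$. This is impossible: since $C_j$ and $C_k$ are connected and both contain $v$, concatenating a path inside $C_j$ from an endpoint of $f_j$ to $v$ with a path inside $C_k$ from $v$ to an endpoint of $f_k$ yields a walk in $G$, hence in $G'$, joining $f_j$ to $f_k$. All of its vertices lie in $V(C_j)\cup V(C_k)$ and so avoid the endpoints of $f_i$, which lie in $V(C_i)\setminus\{v\}$. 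This contradicts the separation, so $v$ must be safe; as $v$ was arbitrary, $G$ is cut-safe.

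The main obstacle is the first step, namely guaranteeing a forced internal edge lying strictly inside each big component. It rests on the elementary fact that a connected graph which is not a path must contain a high-degree vertex or a cycle, combined with the two standard observations used throughout this section (that a high-degree vertex keeps all but two of its incident edges internal, and that every bounded face contributes an internal edge). The delicate point is ensuring the chosen edge avoids $v$, which is precisely why the argument is carried out inside $C_i\setminus v$ rather than $C_i$; once that is secured, the separation clash via the path through $v$ is immediate.
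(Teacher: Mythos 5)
Your proof is correct and follows essentially the same route as the paper's: extract an internal edge of $G'$ lying inside each of the three big components of $\mathcal{C}_v$ (via a $3^+$-vertex or a face of $C_i \setminus v$) and then contradict Observation~\ref{obs:threeInternalEdges}. The only difference is that you spell out details the paper leaves implicit, namely why each big component forces such an internal edge avoiding $v$, and why paths through $v$ defeat the claimed separation.
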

	\begin{proof}
		Let $G$ be a subgraph of a biconnected outerpath $G^\prime$. 
		Suppose $G$ has a cutvertex $v$ such that $\mathcal{C}_v$ contains three big components $C_0, C_1, C_2$.
		For each $i \in \{0, 1, 2\}$, since $C_i \setminus v$ is not a path, it either contains a face or a $3^+$-vertex. In either case, $C_i \setminus v$ will contribute an internal edge $e_i$ to $G^\prime$.
		But $e_0, e_1, e_2$ violate Observation~\ref{obs:threeInternalEdges}.
	\end{proof}

	\begin{lemma}\label{obs:ATinNonLinear2}
		If an outerplanar graph $G$ is linear, then $G$ is block-safe.
	\end{lemma}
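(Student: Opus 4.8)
The plan is to follow the same template that proved the preceding two lemmas (Lemmas~\ref{lemmaNecessityLinearForest} and \ref{obs:ATinNonLinear1}): assume $G$ is a subgraph of a biconnected outerpath $G^\prime$, suppose toward a contradiction that some nontrivial block $B$ of $G$ is \emph{not} safe, and extract three internal edges of $G^\prime$ that violate the separation property of Observation~\ref{obs:threeInternalEdges}. The work is in understanding what failure of block-safety buys us. Negating Definition~\ref{def:blocksafe}, unsafety of $B$ means that for \emph{every} choice of two antipodal edges and \emph{every} bipartition of the components incident to $B$ into $\mathcal{C}_0, \mathcal{C}_1$, at least one of conditions (1) or (2) fails. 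So the first real step is to reinterpret this negation as a concrete structural witness inside $B$ together with its attached components.

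First I would pin down the boundary edges of $B$ that are forced to be internal in $G^\prime$. Lemma~\ref{lem:internalEdgeinFace} already says that for any cutvertex $v$ of $G$ lying in $B$, one of the two boundary edges of $B$ at $v$ becomes internal in $G^\prime$; call this a \emph{forced internal edge at $v$}. Every component incident to $B$ attaches at such a cutvertex, so each attachment point contributes a forced internal edge of $B$ in $G^\prime$. The plan is to show that if $B$ is unsafe then one can locate three such forced internal edges whose endpoint-pairs are pairwise non-separating, contradicting Observation~\ref{obs:threeInternalEdges}. Concretely, I expect two qualitatively different ways $B$ can be unsafe: either (i) there are three distinct cutvertices of $G$ on $B$ carrying ``heavy'' attachments that cannot be funneled through only two antipodal edges, or (ii) a single cutvertex carries a component that is not a tail and must therefore contribute an internal edge that conflicts with the forced edges at two other attachment points. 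In case~(i) the three forced internal edges at the three cutvertices are the desired witnesses; in case~(ii) one combines the internal edge coming from inside the non-tail component (as in the proof of Lemma~\ref{obs:ATinNonLinear1}, where a non-path component yields an internal edge of $G^\prime$) with two forced edges on $B$.

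The heart of the argument is a case analysis on the possible antipodal-edge choices in $B$, driven by whether $B$ is a single face or has several leaf faces. When $B$ is a single face, antipodal means \emph{any} two boundary edges, so unsafety forces either at least three cutvertices each carrying a non-tail component, or some cutvertex whose incident non-tail component cannot be absorbed; either way three mutually non-separating internal edges appear along the cycle. When $B$ has multiple leaf faces, antipodal edges must live in distinct leaf faces, and I would argue that the linear order of internal edges forced by $B$ being inside the outerpath $G^\prime$ (Observation~\ref{obs:threeInternalEdges}) already constrains which two leaf faces can host the attachments; violating condition (1) or (2) then pushes some forced internal edge ``between'' two others, breaking linearity.

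The main obstacle, I expect, is purely combinatorial bookkeeping: translating the quantifier-heavy negation of Definition~\ref{def:blocksafe} into a clean, finite list of structural cases and then, in each case, exhibiting the three offending internal edges and verifying that none of the three endpoint-pairs separates the other two. The subtlety is condition (2)'s asymmetry --- at most one component per side may hang off $a_i$ and it must be a tail --- since a non-tail component at $a_i$ contributes an internal edge \emph{inside} it (by the argument of Lemma~\ref{obs:ATinNonLinear1}) in addition to the forced boundary edge at its attachment, and keeping track of which of these is the relevant witness requires care. I would handle this by always choosing, for each attachment, the forced internal edge guaranteed by Lemma~\ref{lem:internalEdgeinFace}, and only reaching into a component for an extra internal edge when an attachment is a non-tail component, exactly mirroring how Lemma~\ref{obs:ATinNonLinear1} manufactured its three edges.
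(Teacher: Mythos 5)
Your high-level plan --- embed $G$ in a biconnected outerpath $G^\prime$, negate Definition~\ref{def:blocksafe}, and produce three internal edges of $G^\prime$ contradicting Observation~\ref{obs:threeInternalEdges} --- is a viable direction in principle, but the proposal leaves unproved exactly the part that constitutes the proof: you never show that failure of block-safety implies one of your two cases, and in fact it does not. Block-safety is universally quantified over antipodal pairs, over the orientation of their endpoints (which endpoint plays $a_i$), and over partitions of the incident components, and its negation admits failure modes with a single cutvertex and only tail components. Concretely, let $B$ be the biconnected outerpath with faces $abcd$, $cdpef$ and $efgh$, where the middle pentagonal face shares the internal edge $cd$ with the first square and $ef$ with the second, and let $G$ be $B$ together with one pendant vertex $q$ adjacent to $p$. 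There is one cutvertex and its component is a tail, so neither your case (i) (three cutvertices) nor your case (ii) (a non-tail component conflicting with two \emph{other} attachment points) applies; yet $B$ is unsafe simply because $p$ lies only on the middle face and hence can never be an endpoint of an antipodal pair. Moreover, the violating triple for this $G$ consists of the forced edge at $p$ (Lemma~\ref{lem:internalEdgeinFace}) together with the internal edges $cd$ and $ef$ of $B$ \emph{itself} --- a triple that your witness-extraction recipe (forced boundary edges at attachment points, plus an edge inside a non-tail component) never produces. A similar mismatch occurs when two non-tail components attach at adjacent vertices $a,b$ of a single leaf face: every orientation of the only coverable edge $ab$ violates condition (2), but the forced edges at $a$ and $b$ may both be the single edge $ab$, so the witnesses must instead be extracted from inside \emph{both} components together with an internal edge of $B$. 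Until the negation of Definition~\ref{def:blocksafe} is reduced to an exhaustive list of such configurations, each equipped with its own pairwise non-separating triple, what you have is a program rather than a proof; the ``bookkeeping'' you defer is the substance of the lemma.

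It is worth noting that the paper's proof avoids this case analysis entirely by arguing directly instead of by contradiction: it fixes an \emph{edge-minimal} biconnected outerpath $G^\prime$ containing $G$, observes that the set $E'$ of boundary edges of $B$ that become internal in $G^\prime$ is automatically antipodal (the faces of $G^\prime$ lying inside $B$ form a subpath of $\mathcal{T}_{G^\prime}$), places every cutvertex of $B$ on $E'$ via Lemma~\ref{lem:internalEdgeinFace}, and then reads off the orientation and the partition from the structure of $G^\prime$: for each $e_i \in E'$, edge-minimality forces a boundary edge $e_i'$ of $G^\prime$ missing from $G$ in the leaf face of $G_i$ beyond $e_i$, and the path $P_i$ between $e_i$ and $e_i'$ along that face determines which endpoint is $a_i$ and certifies that at most one component --- necessarily a tail contained in $P_i$ --- hangs off it. To complete your contradiction framing you would essentially have to rediscover this structure separately inside every branch of the negation, which is why the direct construction is the shorter route.
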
	
	\begin{proof}
		Let $G^\prime$ be an edge-minimal biconnected outerpath which is a supergraph of $G$.
		If $G$ itself is a biconnected outerpath, we are done. Otherwise, picture any nontrivial block $B$ of $G$ in $G^\prime$. Let $E'$ be the set of boundary edges of $B$ which become internal edges in $G^\prime$. 
		Since $\mathcal{T}_{G^\prime}$ is a path, it is easy to see that $E'$ is antipodal. 
		By Lemma~\ref{lem:internalEdgeinFace}, every cutvertex of $B$ in $G$ is an endpoint of an edge in $E'$.
		Let $E'$ be $\{e_0\}$ if it is singleton, and $\{e_0, e_1\}$ otherwise. The proof will be complete if we can partition the set of components $\mathcal{C}_B$ incident to $B$ into $\mathcal{C}_i$, $i \in \{0,1\}$ and label the endpoints of $e_i$ as $a_i$ and $b_i$ respecting the last condition in Definition~\ref{def:blocksafe}.
		
		By Observation \ref{obs:internalEdgeRemoval}, we get exactly two components in $G^\prime \setminus V(e_i)$. 
		Let $G_i$ be the subgraph of $G^\prime$ induced by $e_i$ and the component of $G^\prime \setminus V(e_i)$ that does not contain any vertex of $B$.
		Let $\mathcal{C}_i$ denote the components in $\mathcal{C}_B$ that are captured by $G_i$ in $G^\prime$.
		Consider the leaf face $f_i$ of ${G}_i$ containing the edge $e_i$.
		Since $f_i$ is not part of $B$, at least one edge $e_i'$ of $f_i$ is missing from $G$.
		By edge-minimality of $G$, this is a boundary edge of $G^\prime$ and hence $G_i$.
		Let $P_i$ denote the (possibly trivial) path in $f_i \setminus e_i'$ between $e_i$ and $e_i'$ that does not contain an  internal edge (if any) of $G_i$. 
		We label the endpoint of $e_i$ which meets $P_i$ as $a_i$ and the other as $b_i$.
		If $P_i$ is trivial then there is no component in $\mathcal{C}_i$ incident to $a_i$.
		If $P_i$ is not trivial, at most one component in $\mathcal{C}_i$, and that too a tail which is a subpath of $P_i$, is incident to $a_i$.
		It is easy to see that this labeling satisfies Definition~\ref{def:blocksafe}.
	\end{proof}

	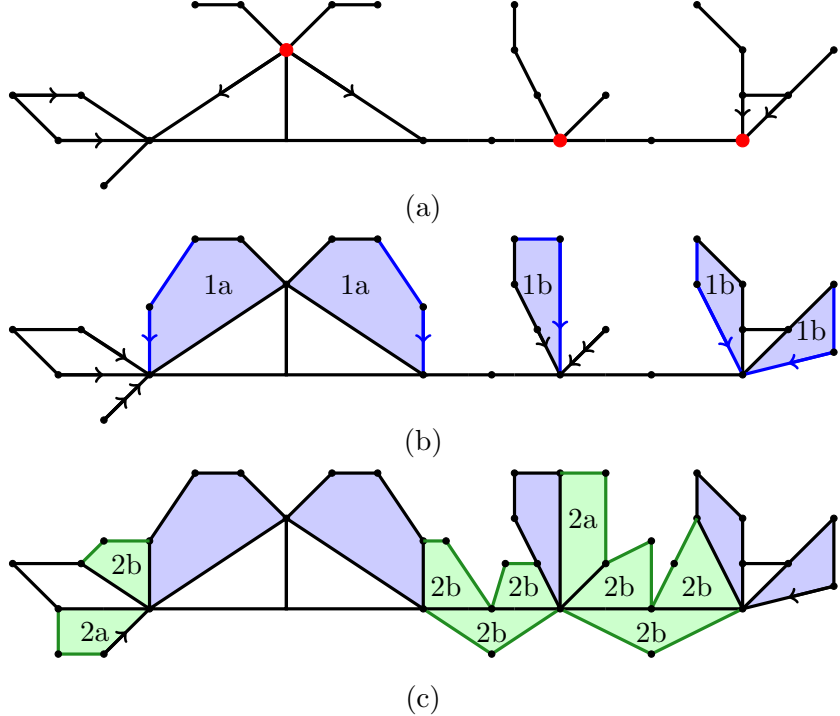
\begin{figure}[h]
		
		\newcommand{\clines}{black}
		\newcommand{\cconnect}{blue}
		\newcommand{\cconnecttwo}{forestgreen}
		\newcommand{\cvertex}{black}
		
		\centering
		\begin{subfigure}[b]{0.9\linewidth}
			
			\centering
			\begin{tikzpicture}[scale=.6,
			declare function={ edgeLine(\x,\y,\z,\zz) = \draw [black, very thick, shorten <= -2pt, shorten >=-2pt] (\x,\y) -- (\z,\zz);}]
			
			\draw[\clines, very thick] (0,0) -- (-3,0) -- (-3,2) -- cycle;
			\draw[\clines, very thick] (-3,0) -- (-6,0)  -- (-3,2) -- cycle;
			
			\draw [\clines, very thick] (-6,0) -- (-7,-1);
			
			\draw [->][\clines, very thick] (-3,2) -- (-1.5,1);
			\draw [->][\clines, very thick] (-3,2) -- (-4.5,1);
			
			\draw [->][\clines, very thick] (-8,0) -- (-7,0);
			\draw [->][\clines, very thick] (-9,1) -- (-8,1);
			
			\draw [->][\clines, very thick] (7,1) -- (7,.5);
			\draw [->][\clines, very thick] (8,1) -- (7.5,.5);

			\draw [\clines, very thick] (-6,0) -- (-7,0);
			\draw [\clines, very thick] (-7,0) -- (-8,0);
			\draw [\clines, very thick] (-8,0) -- (-9,1);
			\draw [\clines, very thick] (-9,1) -- (-7.5,1);
			\draw [\clines, very thick] (-7.5,1) -- (-6,0);
			
			\draw [\clines, very thick] (-3,2) -- (-4,3);
			\draw [\clines, very thick] (-4,3) -- (-5,3);

			\draw [\clines, very thick] (-3,2) -- (-2,3);
			\draw [\clines, very thick] (-2,3) -- (-1,3);
			
			\draw [\clines, very thick] (0,0) -- (1,0);
			\draw [\clines, very thick] (1,0) -- (2,0);
			\draw [\clines, very thick] (2,0) -- (3,0);
			
			\draw [\clines, very thick] (3,0) -- (2.5,1);
			\draw [\clines, very thick] (2.5,1) -- (2,2);
			\draw [\clines, very thick] (2,2) -- (2,3);
			
			\draw [\clines, very thick] (3,0) -- (4,1);
			
			\draw [\clines, very thick] (3,0) -- (4,0);
			\draw [\clines, very thick] (4,0) -- (5,0);
			\draw [\clines, very thick] (5,0) -- (6,0);
			\draw [\clines, very thick] (6,0) -- (7,0);
			
			\draw [\clines, very thick] (7,0) -- (7,1);
			\draw [\clines, very thick] (7,0) -- (8,1);
			\draw [\clines, very thick] (7,1) -- (8,1);
			\draw [\clines, very thick] (7,1) -- (7,2);
			\draw [\clines, very thick] (7,2) -- (6,3);
			\draw [\clines, very thick] (8,1) -- (9,2);

			\filldraw[\cvertex] (-6,0) circle (2pt);
			\filldraw[\cvertex] (-7,-1) circle (2pt);
			\filldraw[\cvertex] (-8,0) circle (2pt);
			\filldraw[\cvertex] (-9,1) circle (2pt);
			\filldraw[\cvertex] (-7.5,1) circle (2pt);
			\filldraw[red] (-3,2) circle (4pt);
			\filldraw[\cvertex] (-4,3) circle (2pt);
			\filldraw[\cvertex] (-5,3) circle (2pt);
			\filldraw[\cvertex] (-2,3) circle (2pt);
			\filldraw[\cvertex] (-1,3) circle (2pt);
			
			\filldraw[\cvertex] (0,0) circle (2pt); 
			\filldraw[\cvertex] (1.5,0) circle (2pt);
			\filldraw[red] (3,0) circle (4pt);
			\filldraw[\cvertex] (2.5,1) circle (2pt);
			\filldraw[\cvertex] (2,2) circle (2pt);
			\filldraw[\cvertex] (2,3) circle (2pt);
			\filldraw[\cvertex] (4,1) circle (2pt);
			
			\filldraw[\cvertex] (5,0) circle (2pt);
			\filldraw[red] (7,0) circle (4pt);
			\filldraw[\cvertex] (7,1) circle (2pt);
			\filldraw[\cvertex] (7,2) circle (2pt);
			\filldraw[\cvertex] (6,3) circle (2pt);
			\filldraw[\cvertex] (8,1) circle (2pt);
			\filldraw[\cvertex] (9,2) circle (2pt);
			
			\node (p1) at (0,-1.5) {(a)};
			\end{tikzpicture}
		\end{subfigure}
		\begin{subfigure}[b]{0.9\linewidth}
			\centering
			\begin{tikzpicture}[scale=.6,
			declare function={ edgeLine(\x,\y,\z,\zz) = \draw [black, very thick, shorten <= -2pt, shorten >=-2pt] (\x,\y) -- (\z,\zz);}]
			
			\draw [thin, draw=white, fill=blue, opacity=0.2]
			(-6,0) -- (-6,1.5) -- (-5,3) -- (-4,3) -- (-3,2) -- cycle;
			\draw [thin, draw=white, fill=blue, opacity=0.2]
			(0,0) -- (-3,2) -- (-2,3) -- (-1,3) -- (0,1.5) -- cycle;
			
			\draw [thin, draw=white, fill=blue, opacity=0.2]
			(3,0) -- (2,2) -- (2,3) -- (3,3) -- cycle;
			
			\draw [thin, draw=white, fill=blue, opacity=0.2]
			(7,0) -- (6,2) -- (6,3) -- (7,2) -- cycle;
			\draw [thin, draw=white, fill=blue, opacity=0.2]
			(7,0) -- (9,2) -- (9,.5) -- cycle;
			
			\draw[\clines, very thick] (0,0) -- (-3,0) -- (-3,2) -- cycle;
			\draw[\clines, very thick] (-3,0) -- (-6,0)  -- (-3,2) -- cycle;
			
			\draw [\clines, very thick] (-6,0) -- (-7,-1);
			
			\draw [\clines, very thick] (-6,0) -- (-7,0);
			\draw [\clines, very thick] (-7,0) -- (-8,0);
			\draw [\clines, very thick] (-8,0) -- (-9,1);
			\draw [\clines, very thick] (-9,1) -- (-7.5,1);
			\draw [\clines, very thick] (-7.5,1) -- (-6,0);
			
			\draw [\clines, very thick] (-3,2) -- (-4,3);
			\draw [\clines, very thick] (-4,3) -- (-5,3);

			\draw [\clines, very thick] (-3,2) -- (-2,3);
			\draw [\clines, very thick] (-2,3) -- (-1,3);
			
			\draw [\clines, very thick] (0,0) -- (1,0);
			\draw [\clines, very thick] (1,0) -- (2,0);
			\draw [\clines, very thick] (2,0) -- (3,0);
			
			\draw [\clines, very thick] (3,0) -- (2.5,1);
			\draw [\clines, very thick] (2.5,1) -- (2,2);
			\draw [\clines, very thick] (2,2) -- (2,3);
			
			\draw [\clines, very thick] (3,0) -- (4,1);
			
			\draw [\clines, very thick] (3,0) -- (4,0);
			\draw [\clines, very thick] (4,0) -- (5,0);
			\draw [\clines, very thick] (5,0) -- (6,0);
			\draw [\clines, very thick] (6,0) -- (7,0);
			
			\draw [\clines, very thick] (7,0) -- (7,1);
			\draw [\clines, very thick] (7,0) -- (8,1);
			\draw [\clines, very thick] (7,1) -- (8,1);
			\draw [\clines, very thick] (7,1) -- (7,2);
			\draw [\clines, very thick] (7,2) -- (6,3);
			\draw [\clines, very thick] (8,1) -- (9,2);
			
			\draw [->][\cconnect, very thick] (-6,1.5) -- (-6,.7);
			\draw [->][\cconnect, very thick] (0,1.5) -- (0,.7);
			
			\draw [->][\clines, very thick] (-8,0) -- (-7,0);
			\draw [->][\clines, very thick] (-7.5,1) -- (-6.5,.35);
			
			\draw [->][\clines, very thick] (-7,-1) -- (-6.5,-.5);
			\draw [->][\clines, very thick] (-7,-1) -- (-6.2,-.2);

			
			
			\draw [->][\clines, very thick] (2.5,1) -- (2.7,.6);
			\draw [->][\cconnect, very thick] (3,3) -- (3,1);
			
			\draw [->][\clines, very thick] (4,1) -- (3.2,.2);
			\draw [->][\clines, very thick] (4,1) -- (3.5,.5);
			
			
			
			\draw [->][\cconnect, very thick] (6,2) -- (6.7,.6);
			\draw [->][\cconnect, very thick] (9,0.5) -- (8,.25);
			
			\draw [\cconnect, very thick] (-5,3) -- (-6,1.5);
			\draw [\cconnect, very thick] (-6,1.5) -- (-6,0);
			\filldraw[\cvertex] (-6,1.5) circle (2pt); 
			\node (p1) at (-4.5,2) {1a};
			
			\draw [\cconnect, very thick] (-1,3) -- (0,1.5);
			\draw [\cconnect, very thick] (0,1.5) -- (0,0);
			\filldraw[\cvertex] (0,1.5) circle (2pt); 			
			\node (p1) at (-1.5,2) {1a};
			
			\draw [\cconnect, very thick] (2,3) -- (3,3);
			\draw [\cconnect, very thick] (3,3) -- (3,0);
			\filldraw[\cvertex] (3,3) circle (2pt); 
			\node (p1) at (2.5,2) {1b};
			
			\draw [\cconnect, very thick] (6,3) -- (6,2);
			\draw [\cconnect, very thick] (6,2) -- (7,0);
			\filldraw[\cvertex] (6,2) circle (2pt); 
			\node (p1) at (6.5,2) {1b};
			
			\draw [\cconnect, very thick] (9,2) -- (9,.5);
			\draw [\cconnect, very thick] (9,.5) -- (7,0);
			\filldraw[\cvertex] (9,.5) circle (2pt); 
			\node (p1) at (8.5,1) {1b};

			\filldraw[\cvertex] (-6,0) circle (2pt);
			\filldraw[\cvertex] (-7,-1) circle (2pt);
			\filldraw[\cvertex] (-8,0) circle (2pt);
			\filldraw[\cvertex] (-9,1) circle (2pt);
			\filldraw[\cvertex] (-7.5,1) circle (2pt);
			\filldraw[\cvertex] (-3,2) circle (2pt);
			\filldraw[\cvertex] (-4,3) circle (2pt);
			\filldraw[\cvertex] (-5,3) circle (2pt);
			\filldraw[\cvertex] (-2,3) circle (2pt);
			\filldraw[\cvertex] (-1,3) circle (2pt);
			
			\filldraw[\cvertex] (0,0) circle (2pt); 
			\filldraw[\cvertex] (1.5,0) circle (2pt);
			\filldraw[\cvertex] (3,0) circle (2pt);
			\filldraw[\cvertex] (2.5,1) circle (2pt);
			\filldraw[\cvertex] (2,2) circle (2pt);
			\filldraw[\cvertex] (2,3) circle (2pt);
			\filldraw[\cvertex] (4,1) circle (2pt);
			
			\filldraw[\cvertex] (5,0) circle (2pt);
			\filldraw[\cvertex] (7,0) circle (2pt);
			\filldraw[\cvertex] (7,1) circle (2pt);
			\filldraw[\cvertex] (7,2) circle (2pt);
			\filldraw[\cvertex] (6,3) circle (2pt);
			\filldraw[\cvertex] (8,1) circle (2pt);
			\filldraw[\cvertex] (9,2) circle (2pt);
			
			\node (p1) at (0,-1.5) {(b)};
			\end{tikzpicture}
		\end{subfigure}
		\begin{subfigure}[b]{0.9\linewidth}
			\centering
			\begin{tikzpicture}[scale=.6,
			declare function={ edgeLine(\x,\y,\z,\zz) = \draw [black, very thick, shorten <= -2pt, shorten >=-2pt] (\x,\y) -- (\z,\zz);}]
			
			\draw [thin, draw=white, fill=blue, opacity=0.2]
			(-6,0) -- (-6,1.5) -- (-5,3) -- (-4,3) -- (-3,2) -- cycle;
			\draw [thin, draw=white, fill=blue, opacity=0.2]
			(0,0) -- (-3,2) -- (-2,3) -- (-1,3) -- (0,1.5) -- cycle;
			
			\draw [thin, draw=white, fill=blue, opacity=0.2]
			(3,0) -- (2,2) -- (2,3) -- (3,3) -- cycle;
			
			\draw [thin, draw=white, fill=blue, opacity=0.2]
			(7,0) -- (6,2) -- (6,3) -- (7,2) -- cycle;
			\draw [thin, draw=white, fill=blue, opacity=0.2]
			(7,0) -- (9,2) -- (9,.5) -- cycle;

			\draw [thin, draw=white, fill=green, opacity=0.2]
			(-7,-1) -- (-8,-1) -- (-8,0) -- (-6,0) -- cycle;
			\draw [thin, draw=white, fill=green, opacity=0.2]
			(-7.5,1) -- (-7,1.5) -- (-6,1.5) -- (-6,0) -- cycle;
			\draw [thin, draw=white, fill=green, opacity=0.2]
			(3,3) -- (4,3) -- (4,1) -- (3,0) -- cycle;
			\draw [thin, draw=white, fill=green, opacity=0.2]
			(0,1.5) -- (.5,1.5) -- (1.5,0) -- (0,0) -- cycle;
			\draw [thin, draw=white, fill=green, opacity=0.2]
			(0,0) -- (3,0) -- (1.5,-1) -- cycle;
			\draw [thin, draw=white, fill=green, opacity=0.2]
			(1.5,0) -- (1.8,1) -- (2.5,1) -- (3,0) -- cycle;
			\draw [thin, draw=white, fill=green, opacity=0.2]
			(4,1) -- (5,1.5) -- (5,0) -- (3,0) -- cycle;
			\draw [thin, draw=white, fill=green, opacity=0.2]
			(3,0) -- (7,0) -- (5,-1) -- cycle;
			\draw [thin, draw=white, fill=green, opacity=0.2]
			(5,0) -- (6,2) -- (7,0) -- cycle;
			
			\draw[\clines, very thick] (0,0) -- (-3,0) -- (-3,2) -- cycle;
			\draw[\clines, very thick] (-3,0) -- (-6,0)  -- (-3,2) -- cycle;
			
			\draw [\clines, very thick] (-6,0) -- (-7,-1);
			
			\draw [\clines, very thick] (-6,0) -- (-7,0);
			\draw [\clines, very thick] (-7,0) -- (-8,0);
			\draw [\clines, very thick] (-8,0) -- (-9,1);
			\draw [\clines, very thick] (-9,1) -- (-7.5,1);
			\draw [\clines, very thick] (-7.5,1) -- (-6,0);
			
			\draw [\clines, very thick] (-3,2) -- (-4,3);
			\draw [\clines, very thick] (-4,3) -- (-5,3);

			\draw [\clines, very thick] (-3,2) -- (-2,3);
			\draw [\clines, very thick] (-2,3) -- (-1,3);
			
			\draw [\clines, very thick] (0,0) -- (1,0);
			\draw [\clines, very thick] (1,0) -- (2,0);
			\draw [\clines, very thick] (2,0) -- (3,0);
			
			\draw [\clines, very thick] (3,0) -- (2.5,1);
			\draw [\clines, very thick] (2.5,1) -- (2,2);
			\draw [\clines, very thick] (2,2) -- (2,3);
			
			\draw [\clines, very thick] (3,0) -- (4,1);
			
			\draw [\clines, very thick] (3,0) -- (4,0);
			\draw [\clines, very thick] (4,0) -- (5,0);
			\draw [\clines, very thick] (5,0) -- (6,0);
			\draw [\clines, very thick] (6,0) -- (7,0);
			
			\draw [\clines, very thick] (7,0) -- (7,1);
			\draw [\clines, very thick] (7,0) -- (8,1);
			\draw [\clines, very thick] (7,1) -- (8,1);
			\draw [\clines, very thick] (7,1) -- (7,2);
			\draw [\clines, very thick] (7,2) -- (6,3);
			\draw [\clines, very thick] (8,1) -- (9,2);

			\draw [\clines, very thick] (-5,3) -- (-6,1.5);
			\draw [\clines, very thick] (-6,1.5) -- (-6,0);
			\filldraw[black] (-6,1.5) circle (2pt); 
			
			\draw [\clines, very thick] (-1,3) -- (0,1.5);
			\draw [\clines, very thick] (0,1.5) -- (0,0);
			\filldraw[black] (0,1.5) circle (2pt); 			
			
			\draw [\clines, very thick] (2,3) -- (3,3);
			\draw [\clines, very thick] (3,3) -- (3,0);
			\filldraw[black] (3,3) circle (2pt); 
			
			\draw [\clines, very thick] (6,3) -- (6,2);
			\draw [\clines, very thick] (6,2) -- (7,0);
			\filldraw[black] (6,2) circle (2pt); 
			
			\draw [\clines, very thick] (9,2) -- (9,.5);
			\draw [\clines, very thick] (9,.5) -- (7,0);
			\filldraw[black] (9,.5) circle (2pt); 

			\draw [->][\clines, very thick] (-7,-1) -- (-6.5,-.5);
			
			\draw [->][\clines, very thick] (9,0.5) -- (8,.25);
			
			
			\draw [\cconnecttwo, very thick] (-7,-1) -- (-8,-1);
			\draw [\cconnecttwo, very thick] (-8,-1) -- (-8,0);
			\filldraw[\cvertex] (-8,-1) circle (2pt);
			\node (p1) at (-7.2,-.5) {2a};

			\draw [\cconnecttwo, very thick] (-7.5,1) -- (-7,1.5);
			\draw [\cconnecttwo, very thick] (-7,1.5) -- (-6,1.5);
			\filldraw[\cvertex] (-7,1.5) circle (2pt);
			\node (p1) at (-6.5,1) {2b};
			
			\draw [\cconnecttwo, very thick] (3,3) -- (4,3);
			\draw [\cconnecttwo, very thick] (4,3) -- (4,1);
			\filldraw[\cvertex] (4,3) circle (2pt);
			\node (p1) at (3.5,2) {2a};
			
			\draw [\cconnecttwo, very thick] (0,1.5) -- (.5,1.5);
			\draw [\cconnecttwo, very thick] (.5,1.5) -- (1.5,0);
			\filldraw[\cvertex] (.5,1.5) circle (2pt);
			\node (p1) at (.5,.5) {2b};
			
			\draw [\cconnecttwo, very thick] (0,0) -- (1.5,-1);
			\draw [\cconnecttwo, very thick] (1.5,-1) -- (3,0);
			\filldraw[\cvertex] (1.5,-1) circle (2pt);
			\node (p1) at (1.5,-.5) {2b};
			
			\draw [\cconnecttwo, very thick] (1.5,0) -- (1.8,1);
			\draw [\cconnecttwo, very thick] (1.8,1) -- (2.5,1);
			\filldraw[\cvertex] (1.8,1) circle (2pt);
			\node (p1) at (2.2,.5) {2b};
			
			\draw [\cconnecttwo, very thick] (4,1) -- (5,1.5);
			\draw [\cconnecttwo, very thick] (5,1.5) -- (5,0);
			\filldraw[\cvertex] (5,1.5) circle (2pt);
			\node (p1) at (4.3,.5) {2b};

			\draw [\cconnecttwo, very thick] (6,2) -- (5.5,1);
			\draw [\cconnecttwo, very thick] (5.5,1) -- (5,0);
			\filldraw[\cvertex] (5.5,1) circle (2pt);
			\node (p1) at (6,.5) {2b};
			
			\draw [\cconnecttwo, very thick] (3,0) -- (5,-1);
			\draw [\cconnecttwo, very thick] (5,-1) -- (7,0);
			\filldraw[\cvertex] (5,-1) circle (2pt);
			\node (p1) at (5,-.5) {2b};

			\filldraw[\cvertex] (-6,0) circle (2pt);
			\filldraw[\cvertex] (-7,-1) circle (2pt);
			\filldraw[\cvertex] (-8,0) circle (2pt);
			\filldraw[\cvertex] (-9,1) circle (2pt);
			\filldraw[\cvertex] (-7.5,1) circle (2pt);
			\filldraw[\cvertex] (-3,2) circle (2pt);
			\filldraw[\cvertex] (-4,3) circle (2pt);
			\filldraw[\cvertex] (-5,3) circle (2pt);
			\filldraw[\cvertex] (-2,3) circle (2pt);
			\filldraw[\cvertex] (-1,3) circle (2pt);
			
			\filldraw[\cvertex] (0,0) circle (2pt); 
			\filldraw[\cvertex] (1.5,0) circle (2pt);
			\filldraw[\cvertex] (3,0) circle (2pt);
			\filldraw[\cvertex] (2.5,1) circle (2pt);
			\filldraw[\cvertex] (2,2) circle (2pt);
			\filldraw[\cvertex] (2,3) circle (2pt);
			\filldraw[\cvertex] (4,1) circle (2pt);
			
			\filldraw[\cvertex] (5,0) circle (2pt);
			\filldraw[\cvertex] (7,0) circle (2pt);
			\filldraw[\cvertex] (7,1) circle (2pt);
			\filldraw[\cvertex] (7,2) circle (2pt);
			\filldraw[\cvertex] (6,3) circle (2pt);
			\filldraw[\cvertex] (8,1) circle (2pt);
			\filldraw[\cvertex] (9,2) circle (2pt);
			
			\node (p1) at (0,-2) {(c)};
			\end{tikzpicture}
		\end{subfigure}
		
		\caption{
			(a)~A block-safe and cut-safe outerplanar graph $G$ such that $\mathcal{T}_G$ is a linear forest. Terminal edges are shown oriented and cutvertices for Action \ref{action:tuckTail} are highlighted. 
			(b)~Intermediate graph {$G_3$}. Faces created by the two cases of Action \ref{action:tuckTail} are marked 1a and 1b respectively. 
			(c)~Final biconnected outerpath $G^\prime$. Faces created by Action \ref{action:siblingBond} are marked 2a if both the components merged are small and 2b otherwise. Since all the connectors are two-length paths, $G$ is an induced subgraph of $G^\prime$.}
		\label{fig:2connect}
	\end{figure}

	\subsection{Proof of Theorem \ref{thm:LinearCharacterization} (Sufficiency)}
	
	Now we prove the other direction of Theorem~\ref{thm:LinearCharacterization}. Let $G$ be a cut-safe and block-safe outerplanar graph such that $\mathcal{T}_G$ is a linear forest. We will construct a biconnected outerpath $G'$ which contains $G$ as a subgraph. A \emph{connector} between two nonadjacent boundary vertices $u$ and $v$ of a plane graph $H$ is either an edge $uv$ or a two-length path $(u,w,v)$ such that $w \notin V(H)$ drawn through the outer face.
	Hence the resultant graph is planar. 
	In the following construction, if every connector used is an edge (resp. two-length path), then $G$ is contained as a spanning (resp. induced) subgraph of $G'$.
	The construction of $G'$ is done in two phases. Each phase consists of repeated applications of a single  action.
	
	\begin{definition}\label{defn:maximalSmall}
		For a cutvertex $v$, a small component $C \in \mathcal{C}_v$ is called a \emph{maximal small component} if $C$ is not a subgraph of a small component incident to another cutvertex.
	\end{definition}
	
	
	\begin{action}[Tuck the tails]
		\label{action:tuckTail}
		Let $v$ be a cutvertex in $G$ and $C \in C_v$ be a maximal small component associated to a terminal edge $(a_i,b_i)$ of a nontrivial block $B$. (a) If $v = a_i$ (in which case $C$ is a tail at $a_i$), add a connector from the leaf of $C$ to $b_i$. This merges the block $B$ and the component $C$ into a new block ${{B^\prime}}$. Designate the remaining terminal edge of $B$ and the last edge $(v', b_i)$ of the connector as the two terminal edges of ${{B^\prime}}$. (b) If $v=b_i$, add a connector between $v$ and each endvertex of the path $C \setminus v$  which is not already adjacent to $v$ to form a block ${{B^\prime}}$ containing $C$ (but not $B$).
		If both the endvertices of $C\setminus v$ were adjacent to $v$, then $B^\prime = C$. 
		Designate $(u, v)$ and $(w, v)$ as the terminal edges of ${{B^\prime}}$, where $uv$ and $vw$ are the boundary edges of ${{B^\prime}}$ incident to $v$. 
		If ${{B^\prime}}$ is a single edge $uv$, designate $(u, v)$ as both the first and second terminal edge of ${{B^\prime}}$.
	\end{action}
	
	Let us call the resulting graph $G_2$. In Case (a), {the weak dual $\mathcal{T}_{B^\prime}$ of $B^\prime$} is a path formed by extending {$\mathcal{T}_B$} with a leaf edge corresponding to the dual of $a_ib_i$. The new terminal edge in ${{B^\prime}}$ is in the new leaf face and hence the two terminal edges of ${{B^\prime}}$ are antipodal. Every component incident to $B$ in $G$ (except $C$) is incident to ${{B^\prime}}$ in $G_2$. Each such component associated to a terminal edge of $B$ can be associated to the corresponding terminal edge of ${{B^\prime}}$. In Case (b), the structure of ${{B^\prime}}$ is simple since every internal edge of ${{B^\prime}}$ is incident to $v$. {It is easily verified that $\mathcal{T}_{B^\prime}$} is a path and ${{B^\prime}}$ is safe with the given terminals. Next we argue that $G_2$ is cut-safe. In Case (a), the only change to $\mathcal{C}_v$ in $G_2$ is that $C$ and the component $C_B$ containing $B$ merges into a single component $C_{{B^\prime}}$. But if $C_B$ is small for $v$ in $G$, then $C_{{B^\prime}}$ remains small for $v$ in $G_2$ and hence $v$ remains safe in $G_2$. 
	In Case (b), since ${{B^\prime}} \setminus v$ is a path, ${{B^\prime}}$ is a small component for $v$ and $v$ is safe in $G_2$. 
	{Let $u \neq v$} be a cutvertex in $G_2$ and let $C_u$ be a small component incident to $u$. Since $C$ is a maximal small component, $C_u$ does not contain $C$ and hence $C_u$ remains unaffected (and hence small) in $G_2$. So $u$ remains safe in $G_2$.
	
	We perform Action~\ref{action:tuckTail} once for each maximal small component incident to a cutvertex in $G$ to obtain a graph $G_3$. Note  that for each cutvertex $v$ in $G_3$, each small component in $\mathcal{C}_v$ is a block. 
	Also, each component incident to a nontrivial block $B$ in $G_3$ is incident at the second vertex of a terminal edge of $B$.
	For each trivial block $B = uv$ which is not a pendant edge of $G_3$, assign $(u,v)$ to be the first and $(v,u)$ to be the second terminal edge of $B$.
	Associate every component incident to $B$ at $u$ (resp. $v$) with terminal edge $(v,u)$ (resp. $(u,v)$).
	\begin{action}[Bond with your sibling]
		\label{action:siblingBond}
		Let $v$ be a cutvertex in $G_3$ and let $C_0$ and $C_1$ be two components from $\mathcal{C}_v$, chosen prioritizing small components over big ones. For $i \in \{0,1\}$, let $B_i$ be the block in $C_i$ which contains $v$, and 
		$(u_i, v)$ be a terminal edge of $B_i$.  Add a connector from $u_0$ to $u_1$. This merges $B_0$ and $B_1$ into a new block $B$. The remaining terminal edges, one each from $B_0$ and $B_1$, are designated as the terminal edges of $B$. 
	\end{action}
	
	Let us call the resulting graph $G_4$. The {weak dual $\mathcal{T}_B$ of $B$} is a path obtained by connecting two leaf vertices of {$\mathcal{T}_{B_0}$ and $\mathcal{T}_{B_1}$} through the dual vertex corresponding to the new bounded face. The new terminal edges of $B$ are antipodal in $B$. 
	Every cutvertex other than $v$ in $B_i$ ($i \in \{0,1\}$) is contained in the second terminal edge of $B_i$ which continues to be a terminal edge in $B$.
	If $C_i$ is small for an $i \in \{0,1\}$, then $B_i = C_i$ and the second terminal edge of $B_i$ has $v$ as its second vertex. 
	If both $C_0$ and $C_1$ are big for $v$, since $v$ is safe in $G_3$,  there are no other components in $\mathcal{C}_v$. Hence $v$ is no longer a cutvertex in $G_4$. Hence all the cutvertices of $B$ are contained in its terminal edges and $B$ is safe. Now we argue that $G_4$ is cut-safe. The difference from $\mathcal{C}_v$ in $G_3$ to $\mathcal{C}_v$ in $G_4$ is that $C_0$ and $C_1$ gets merged into a single component. 
	If both $C_0$ and $C_1$ are small components, then one can easily check that the new component is $B$ itself and it is small for $v$. Hence the number of big components in $\mathcal{C}_v$ does not increase and hence $v$ remains safe in $G_4$ or ceases to be a cutvertex. Let $u \neq v$ be a cutvertex in $G_4$ and let $C_u$ be a small component incident to $u$. Since $C_u$ is a block, it does not contain the cutvertex $v$ and hence remains unaffected (and thus small) in $G_4$. So $u$ remains safe in $G_4$.
	
	We repeat Action~\ref{action:siblingBond} as long as there is a cutvertex. Let $G'$ denote the resulting biconnected graph. Since each action preserves cut-safety, block-safety and linearity of the weak dual, $G'$ is a biconnected outerpath. This completes the proof of Theorem~\ref{thm:LinearCharacterization}.
	
	\begin{remark}\label{rem:numNewVert}
		From the above construction, one can check that each connector added reduces the number of blocks at least by one. 
		Hence the total number of new vertices added is less than the number of blocks in the original graph $G$.
	\end{remark}
	
	\begin{remark}\label{rem:polytime}
		Checking the cut-safety, block-safety and the linearity of weak dual of an outerplanar graph can be done in polynomial time. It can also be verified that the construction of $G'$ from $G$ can be done in polynomial time.
	\end{remark}
	
	By Theorem \ref{thm:LinearCharacterization}, any linear outerplanar graph  can be realized as an induced subgraph of a biconnected outerpath.
	This together with Theorem \ref{thm:2Cvpg} gives the following corollary.
	
	\begin{corollary}\label{cor:linearB0}
		Every linear outerplanar graph is \B0.
	\end{corollary}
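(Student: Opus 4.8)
The plan is to chain the two main results of the preceding sections, using the fact that \B0 is closed under induced subgraphs. Concretely, I would deduce the statement from the drawing procedure for biconnected outerpaths (Theorem~\ref{thm:2Cvpg}) together with the ``moreover'' clause of the characterization theorem (Theorem~\ref{thm:LinearCharacterization}), which guarantees that any linear outerplanar graph embeds as an \emph{induced} subgraph of some biconnected outerpath. No new machinery is needed; the corollary is an assembly of results already in hand.

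First I would take an arbitrary linear outerplanar graph $G$. By Theorem~\ref{thm:LinearCharacterization}, there is a biconnected outerpath $G'$ that contains $G$ as an induced subgraph. Next, Theorem~\ref{thm:2Cvpg} supplies a \B0 representation of $G'$, i.e., an assignment of axis-parallel segments to $V(G')$ whose intersection graph is exactly $G'$. Finally, I would discard the segments assigned to $V(G') \setminus V(G)$ and retain those assigned to $V(G)$. Since the intersection pattern among the retained segments is precisely the subgraph of $G'$ induced on $V(G)$, which equals $G$, this yields a \B0 representation of $G$.

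The one point that requires care is \emph{why} the induced-subgraph realization is the version we need. The class \B0 is hereditary under vertex deletion --- erasing a vertex's segment never creates new intersections --- so induced subgraphs of \B0 graphs are again \B0. It is \emph{not} closed under edge deletion, since forcing two intersecting segments apart may destroy other intersections. This is exactly why the stronger conclusion of Theorem~\ref{thm:LinearCharacterization} (realization as an \emph{induced} subgraph, not merely as a subgraph in the sense of Definition~\ref{def:linear}) must be invoked. Beyond this observation there is no real obstacle: all the substantive effort --- the constructive drawing in Theorem~\ref{thm:2Cvpg} and the augmentation of $G$ into a biconnected outerpath in the sufficiency proof of Theorem~\ref{thm:LinearCharacterization} --- has already been carried out, so the corollary is a short deduction from these pieces.
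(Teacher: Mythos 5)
Your proposal is correct and matches the paper's own argument exactly: the paper likewise combines the ``moreover'' (induced-subgraph) clause of Theorem~\ref{thm:LinearCharacterization} with the drawing procedure of Theorem~\ref{thm:2Cvpg} and the hereditary nature of \B0. Your remark on why induced (rather than arbitrary) subgraph containment is the operative hypothesis is a nice clarification, but the substance is the same short deduction.
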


	\section{Linearity of AT-free outerplanar graphs}\label{sec:atfree}
	
		By Corollary \ref{cor:linearB0}, in order to prove that AT-free outerplanar graphs are \B0, it is enough to show that they are linear. Lemma \ref{lemma:atfree} in this section asserts the same.
		Note that one may suspect whether all AT-free outerplanar graphs can be realized as induced subgraphs of biconnected AT-free outerplanar graphs. But this is incorrect. 
		For example, let $G$ be a $C_5$ together with a pendant vertex. While $G$ is AT-free outerplanar, it is easy to see that any biconnected outerplanar graph $G'$, containing $G$ as an induced subgraph, is not AT-free.
	
		The following observation is helpful in proving Lemma \ref{lemma:atfree}.

	\begin{observation}
		\label{obsFaceDeg2}
		Let $B$ be a nontrivial block of an outerplanar graph $G$. Every leaf face $f$ of $B$ contains a vertex $u$ of degree two in $G[B]$ which is not incident to any other bounded face of $B$. 
	\end{observation}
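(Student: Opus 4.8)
The plan is to exploit the standard picture of a $2$-connected outerplanar graph as a polygon with non-crossing chords, and to locate the promised vertex among the ``exposed'' vertices of the leaf face $f$. First I would record two preliminary facts. Since a block is a maximal biconnected subgraph and adding any edge to a biconnected graph keeps it biconnected, $B$ admits no extra chord between two of its vertices; hence $B$ is an induced subgraph of $G$, so $G[B] = B$ and the degree of a vertex measured in $G[B]$ coincides with its degree in $B$ (I will write $\deg_B$). Second, because $B$ is $2$-connected and outerplanar, in the embedding inherited from $G$ the outer boundary of $B$ is a Hamiltonian cycle $H$, every internal edge of $B$ is a non-crossing chord of $H$, the bounded faces of $B$ are exactly the regions cut out by these chords, and $\mathcal{T}_B$ is a tree whose leaves correspond to the faces bounded by a single chord.

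Next I would dispose of the degenerate case in which $B$ is a single face: then $f$ is the whole polygon $C_k$ with $k \geq 3$, every vertex has degree two, and there is no other bounded face, so any vertex of $f$ works. Assuming henceforth that $B$ has more than one bounded face, the leaf face $f$ has exactly one internal edge $e = xy$, and its boundary consists of $e$ together with a path $x, w_1, \ldots, w_m, y$ of edges of $H$, where $m \geq 1$ because $f$ is a cycle of length at least three and $e$ is just one of its edges.

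The heart of the argument is to show that each $w_j$ has degree two and lies on no bounded face other than $f$; I would prove this by ruling out any chord incident to $w_j$ via the non-crossing property. Reading off the cyclic order of $H$ as $x, w_1, \ldots, w_m, y, \ldots$, a hypothetical chord $w_j z$ falls into one of two cases. If $z$ lies on the arc $x, w_1, \ldots, w_m, y$, then $w_j z$ runs inside the region bounded by $e$ and this arc, that is inside $f$, and would split $f$ into two faces, contradicting that $f$ is a single face. If instead $z$ lies on the complementary arc, then the endpoints of $w_j z$ and of $xy$ alternate around $H$, so $w_j z$ crosses $e$, contradicting non-crossingness. Hence $w_j$ is incident only to the two boundary edges $w_{j-1}w_j$ and $w_j w_{j+1}$, giving $\deg_B(w_j) = 2$; and these two edges bound $f$ on one side and the outer face on the other, so $f$ is the only bounded face meeting $w_j$. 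Taking $u = w_1$ then completes the proof.

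I expect the main obstacle to be a matter of care rather than depth: pinning down the single-chord structure of a leaf face precisely (including the boundary case $B \cong C_k$), and making the crossing-versus-subdivision dichotomy airtight. In particular one must ensure that the cyclic order used in the crossing argument is exactly the one induced by the outer Hamiltonian cycle $H$, and that the non-crossing chord model being invoked is precisely the standard structure of $2$-connected outerplanar graphs in their canonical embedding.
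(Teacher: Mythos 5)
Your proof is correct and takes essentially the same approach as the paper's brief justification: a leaf face has at least three vertices, only the (at most two) endpoints of its unique internal edge can lie on another bounded face, and every remaining vertex has degree two in $G[B]$ and meets no other bounded face of $B$. The only difference is one of detail---you make rigorous, via the non-crossing chord argument in the polygon model, the claim the paper asserts in a single sentence.
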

		\noindent \emph{Justification:} A face has at least three vertices and at most two vertices of a leaf face
		can be shared by another face. Hence all the remaining vertices are
		of degree two in $G[B]$ and are not incident to any other bounded face of $B$.

	\begin{lemma}\label{lemma:atfree}
		AT-free outerplanar graphs are linear.
	\end{lemma}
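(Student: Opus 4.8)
The plan is to invoke the characterization in Theorem~\ref{thm:LinearCharacterization}: it suffices to prove that an AT-free outerplanar graph $G$ is cut-safe, block-safe, and that $\mathcal{T}_G$ is a linear forest. I would establish each of the three conditions in contrapositive form, showing that a violation produces an asteroidal triple. The common device is a \emph{hidden vertex} behind a cutvertex: given a cutvertex $v$ and an incident component $C$, a vertex $y \in C$ with $y \notin N[v]$ satisfies $N[y] \subseteq C \setminus \{v\}$, so that any path stationed outside $C$ (in $B$ or in another incident piece) never meets $N[y]$. Such a hidden vertex always exists inside a \emph{big} component: if every vertex of the connected graph $C \setminus v$ were adjacent to $v$, then $C$ would be the cone over $C \setminus v$, and this cone is outerplanar only when $C \setminus v$ is a path (a cycle in $C\setminus v$ yields a $K_4$-minor and a degree-three vertex yields a $K_{2,3}$-minor), contradicting bigness.

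For the linear forest condition, if $\mathcal{T}_G$ is not linear then some face $f$ of a nontrivial block $B$ has three internal edges $e_1,e_2,e_3$. Deleting the vertex $f$ from the tree $\mathcal{T}_B$ leaves three subtrees, one beyond each $e_j$; letting $R_j$ be the union of the corresponding faces, one has $R_k \cap V(f)$ equal to the endpoint pair of $e_k$. Using Observation~\ref{obsFaceDeg2} I pick in a leaf face of each $R_j$ a private degree-two vertex $x_j$, so that $N[x_j] \subseteq R_j$. The triple $\{x_1,x_2,x_3\}$ is asteroidal: a path from $x_i$ to $x_j$ ascends through $R_i$, crosses $f$ along the boundary arc between $e_i$ and $e_j$ that avoids $e_k$ (hence avoids both endpoints of $e_k$), and descends through $R_j$, never meeting $N[x_k] \subseteq R_k$. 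For cut-safety, a cutvertex $v$ carrying three big components $C_0,C_1,C_2$ supplies three hidden vertices $y_0,y_1,y_2$ as above; since the $C_i$ meet only in $v$ and $v \notin N[y_k]$, any path between $y_i$ and $y_j$ routed through $v$ avoids $N[y_k]$, giving an AT.

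The crux is block-safety, and this I expect to be the main obstacle. Here I would first use the already-established fact that $\mathcal{T}_B$ is a path, so a nontrivial block $B$ is a biconnected outerpath with two leaf faces $F_1$ and $F_m$; by Observation~\ref{obsFaceDeg2} each leaf face holds a private degree-two vertex, which acts as a ``free'' hidden vertex anchored at that end. The governing lemma is that three hidden vertices sitting behind three vertex-disjoint incident components of $B$ form an AT, since connecting paths can route through $B$, whose vertices lie outside every $N[y]$. Consequently any incident component that carries a hidden vertex and is separated from both leaf faces---for instance attached at an internal vertex of the outerpath---would, together with the two end anchors, create an AT. I would turn this into the statement that every incident component with a hidden vertex is attached inside one of the two leaf faces, and then select the antipodal terminal edges $a_0b_0 \in F_1$ and $a_1b_1 \in F_m$ so that each incident component is incident to one of $a_0,b_0,a_1,b_1$.

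The remaining and most delicate work is to discharge the fine print of Definition~\ref{def:blocksafe}: that the components at the inner endpoints $a_i$ reduce to at most a single tail. This forces a case analysis that separates the leaf faces' private vertices, treats pendant edges and fan-like small components (which carry no hidden vertex) apart from big ones, and handles single-face blocks $B \cong C_3, C_4, C_5$---the only AT-free cycles, since $C_k$ with $k \ge 6$ is itself asteroidal---under the special ``single face'' clause of antipodality. Pinning down the two terminal edges and verifying the tail condition against all these attachment patterns, rather than merely bounding the number of attachment sites, is where I anticipate the real difficulty.
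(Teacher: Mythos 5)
Your reduction to the three conditions of Theorem~\ref{thm:LinearCharacterization} is exactly the paper's strategy, and your arguments for the linear-forest condition and for cut-safety are essentially the paper's own proofs (the cone/$K_4$/$K_{2,3}$ argument for finding a vertex outside $N[v]$ in a big component is precisely how the paper does it). The gap is in block-safety, and it is exactly where you say you ``anticipate the real difficulty'': you never close it, and the tool you committed to cannot close it. Your governing device is a \emph{hidden} vertex $y \notin N[v]$ inside an incident component, but fan-like components --- above all a single pendant edge attached at an internal vertex of the outerpath $B$ --- contain no hidden vertex at all, so your machinery places no constraint on where they attach. Yet such a pendant edge does violate block-safety and does create an AT; your plan defers precisely these cases to an unspecified case analysis, which is the entire content of the condition you must verify.

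The idea you are missing is the paper's choice of witness: for a cutvertex $v$ of a nontrivial block $B$, take an \emph{arbitrary} neighbor $v'$ of $v$ outside $B$. By maximality of the block, $N[v'] \cap V(B) = \{v\}$, so $v'$ plays the role of a hidden vertex uniformly, for every incident component, pendant edges included. With this witness the paper proves something stronger and simpler than your ``components with hidden vertices sit in leaf faces'': (i) $B$ has at most two cutvertices (three outside-neighbors $v_1', v_2', v_3'$ form an AT via boundary arcs of $B$); (ii) every cutvertex lies in a leaf face, and the two cutvertices lie in \emph{distinct} leaf faces (else $\{u_1, u_2, v'\}$ or $\{v_1', v_2', u_2\}$ is an AT, where $u_1, u_2$ are the private degree-two vertices from Observation~\ref{obsFaceDeg2}). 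One then chooses the terminal edges $a_ib_i$ so that the cutvertices are exactly the $b_i$ endpoints; no component is incident to either $a_i$, so the tail clause of Definition~\ref{def:blocksafe} is vacuous and the delicate case analysis you were bracing for never arises.
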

	\begin{proof}
		Let $G$ be an AT-free outerplanar graph.
		If $\mathcal{T}_G$ is not a linear forest, then there exists three internal edges in one face $f$ of $G$ sharing with faces, say, $f_1,f_2,f_3$. 
		One can verify that we can choose one vertex $v_i$ ($1\leq i \leq 3$) each from $f_i \setminus f$ to form an AT. 
		
		If $G$ is not cut-safe, then there exists a cutvertex $v$ where $\mathcal{C}_v$ has more than two big components, say, $C_1,C_2,C_3$. 
		That is, $C_i\setminus \{v\}$ ($1\leq i \leq 3$) is not a path and thus it has either a $3^+$-vertex $v_i$ or a face $f_i$.
		If all the neighbors of $v_i$ are adjacent to $v$, then it is easy to see that $C_i$ has $K_{2,3}$ as subgraph.
		Similarly if all the vertices of $f_i$ are adjacent to $v$, then one can verify that $C_i$ has $K_4$ as minor.
		Outerplanar graphs do not contain $K_{2,3}$ or $K_4$ as a minor \cite{chartrand1967planar}.
		Thus in both cases, there exists a vertex $v_i^\prime \in C_i\setminus \{v\}$ nonadjacent to $v$ in $C_i$.
		It is easy to see that $\{v_1^\prime, v_2^\prime, v_3^\prime\}$ forms an AT.
		
		It remains to check whether $G$ is block-safe.
		Consider any nontrivial block $B$ of $G$.
		Since $\mathcal{T}_G$ is a linear forest, $B$ either has exactly two leaf faces $f_1,f_2$ or $B$ itself is a face $f_1$.
		In the former case, Observation \ref{obsFaceDeg2} guarantees that $G[B]$ has two vertices $u_1$ and $u_2$ of degree two in $f_1$ and $f_2$ respectively.
		For any cutvertex $v$ in $B$, we denote an arbitrary neighbor of $v$ outside $B$ as $v^\prime$.
		If there exists three cutvertices in $B$, say, $v_1,v_2,v_3$, then 
		by using the path along the outer cycle (through the boundary) of $B$, one can verify that $\{v_1^\prime, v_2^\prime, v_3^\prime\}$ is an AT.
		Thus there can be at most two cutvertices in $B$.
		If $B$ itself is a face $f_1$, we can choose arbitrary boundary edges $a_0b_0$ and $a_1b_1$ of $B$ such that $b_0$ and $b_1$ are the only cutvertices of $B$.
		These two edges are antipodal in $B$, and thus $B$ satisfies the conditions of the terminal edges as per Definition \ref{def:blocksafe}, thereby showing that $B$ is safe.
		Hence we assume in the rest of the proof that $B$ contains more than one face.
		If one of the cutvertices, say $v$, is neither incident to $f_1$ nor $f_2$, then the vertices $u_1$ and $u_2$, together with $v^\prime$ form an AT.
		Hence the cutvertices incident to $B$, if any, must lie in the leaf faces of $B$. 
		Moreover, we intend to associate each cutvertex $v$ to a different leaf face of $B$ containing $v$.
		If this is not possible, that is, both the cutvertices, say $v_1,v_2$ are not incident to one of the leaf faces, say $f_2$, then $\{v_1^\prime, v_2^\prime, u_2\}$ forms an AT.
		Thus we conclude that there exist at most two cutvertices incident to $B$, and they can be associated to different leaf faces of $B$ containing them.
		We can choose arbitrary boundary edges $a_0b_0$ and $a_1b_1$ of $B$, one from each leaf face such that $b_0$ and $b_1$ are the only cutvertices of $B$.
		Clearly those edges are antipodal and their endpoints meet the conditions of the terminal edges as per Definition \ref{def:blocksafe}. 
		Thus $B$ is safe.	
	\end{proof}

	Lemma \ref{lemma:atfree} together with Corollary \ref{cor:linearB0} establish the main result.
	
	\begin{theorem}
		\label{thm:ATfreeB0}
		Every AT-free outerplanar graph is \B0.
	\end{theorem}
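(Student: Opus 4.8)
The plan is to obtain the theorem as the terminal link in the two-part chain that the whole paper has been assembling, rather than to draw anything directly. Since \B0 is hereditary and every biconnected outerpath is \B0 by Theorem~\ref{thm:2Cvpg}, any graph that embeds as an induced subgraph of a biconnected outerpath is itself \B0; this is exactly the content of Corollary~\ref{cor:linearB0}, which says that every linear outerplanar graph is \B0. Hence the only thing left to supply for an AT-free outerplanar graph $G$ is its linearity, and that is furnished by Lemma~\ref{lemma:atfree}. So the proof is simply: $G$ is linear by Lemma~\ref{lemma:atfree}, and therefore \B0 by Corollary~\ref{cor:linearB0}.

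Because the substance lives entirely upstream, I would not attempt anything new in the theorem itself beyond citing these two results. If I were reconstructing the argument for linearity from scratch (Lemma~\ref{lemma:atfree}), the route I would take is to verify the three clauses of the characterization in Theorem~\ref{thm:LinearCharacterization} --- that $\mathcal{T}_G$ is a linear forest, that $G$ is cut-safe, and that $G$ is block-safe --- each by contrapositive: assuming the relevant structural excess (a face with three internal edges, a cutvertex with three big components, or a block violating the terminal-edge partition) and exhibiting three vertices that form an asteroidal triple, contradicting AT-freeness. The degree-two witnesses guaranteed in each leaf face by Observation~\ref{obsFaceDeg2}, together with arbitrary neighbors $v'$ taken outside a block, are the natural candidates for the AT, with the outer cycle supplying the connecting paths that avoid the third vertex's closed neighborhood.

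There is essentially no obstacle in the statement being proved here --- it is an immediate corollary. The genuine difficulty sits in the two ingredients already discharged: proving block-safety inside Lemma~\ref{lemma:atfree} (bounding the number of cutvertices per nontrivial block to two and seating them in distinct leaf faces, where one must check that each forbidden configuration truly \emph{forces} an AT rather than merely resembling one), and maintaining the \emph{extendable} invariant throughout the inductive drawing of Theorem~\ref{thm:2Cvpg}. I would flag the block-safety analysis as the part most prone to hidden case gaps, since it is precisely where AT-freeness must be converted into the exact combinatorial shape demanded by Definition~\ref{def:blocksafe}.
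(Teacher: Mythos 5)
Your proposal is correct and matches the paper's own proof exactly: the paper derives Theorem~\ref{thm:ATfreeB0} by combining Lemma~\ref{lemma:atfree} (AT-free outerplanar graphs are linear) with Corollary~\ref{cor:linearB0} (linear outerplanar graphs are \B0), and your sketch of how the lemma itself is proved (contrapositive verification of the three characterization clauses, producing an asteroidal triple in each failure case) also mirrors the paper's argument.
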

	
	\section{Concluding Remarks}
	\label{sec:Conclusion}
	
	We have showed that all linear outerplanar graphs are \B0. 
	However, it is easy to see that linearity is not necessary for \B0 outerplanar graphs. For example, planar bipartite graphs, and hence outerplanar bipartite graphs are \B0 \cite{hartman1991grid}. 
	But outerplanar bipartite graphs can be far from being linear, in the sense that their weak duals can be trees with arbitrarily large degrees for internal nodes. 

	One can see from a \B0 drawing that the closed neighborhood of every vertex is an interval graph \cite{golumbic2013intersectionForDiamond}. Next, we strengthen this necessary condition by identifying adjacent vertices which are forced to be represented by collinear segments in any \B0 drawing. 
	An induced $C_4$ has essentially a unique \B0 representation \cite{asinowski2012vertex}.
	A $C_4$ together with exactly one chord is called a \emph{diamond}, and the chord is called the \emph{diamond diagonal}.
	A diamond diagonal can only be drawn as the intersection of collinear line segments in every \B0 representation \cite{golumbic2013intersectionForDiamond}.
	In any \B0 representation of an odd-cycle, an odd number of edges has to be represented as the intersection of collinear line segments.
	It is also easy to verify that in a given \B0 representation $D$ of $G$, the binary relation \emph{collinearity} on the set of line segments of $D$ is an equivalence relation. 
	We combine these observations to obtain the following.
	
	\begin{proposition}\label{propNecessity}
		If a graph $G$ is \B0, then there exists a subgraph $H$ of $G$ containing
			all diamond diagonals of $G$
		such that,
		\begin{enumerate}[label=(\alph*)]
			\item for every component $C$ of $H$, the subgraph of $G$ induced by the closed neighborhood $N[C]$ of $C$ is an interval graph, and
			\item the minor of $G$ obtained by contracting every component of $H$ in $G$ is a bipartite \B0 graph.
		\end{enumerate}
	\end{proposition}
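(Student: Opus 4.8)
The plan is to read off $H$ directly from a fixed \B0 representation $D$ of $G$. Recall from the discussion preceding the statement that collinearity is an equivalence relation on the segments of $D$; let $l_v$ denote the segment of vertex $v$. I would define $H$ to be the spanning subgraph of $G$ whose edges are exactly the pairs $uv \in E(G)$ with $l_u$ and $l_v$ collinear. Since every diamond diagonal is an edge represented by collinear (hence intersecting) segments \cite{golumbic2013intersectionForDiamond}, $H$ contains all diamond diagonals. Because collinearity is transitive, all segments of a single component $C$ of $H$ share one supporting line $\ell_C$, either horizontal or vertical (a point segment is given a fixed orientation so that collinearity is genuinely an equivalence relation and each segment has a supporting line). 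As $H[C]$ is connected and each of its edges joins two overlapping collinear segments, the union $\bigcup_{v \in C} l_v$ is a single subsegment $U_C$ of $\ell_C$.

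For part (a), the key observation I would establish first is that every vertex $w \in N[C] \setminus C$ is represented by a segment \emph{transversal} to $\ell_C$. Indeed, $l_w$ meets $l_c$ for some $c \in C$, so $l_w$ meets $\ell_C$; if $l_w$ were parallel to $\ell_C$ it would be collinear with $l_c$, forcing $wc \in E(H)$ and hence $w \in C$, a contradiction. Thus $l_w$ crosses $\ell_C$ at a single point $p_w \in U_C$. Taking $\ell_C$ horizontal without loss of generality, I would represent each $u \in N[C]$ by the interval $I_u$ equal to the $x$-extent of $l_u \cap \ell_C$: a genuine subinterval of $U_C$ when $u \in C$, and the single point $p_u$ when $u \in N[C]\setminus C$. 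A short case check then shows $I_u \cap I_{u'} \neq \emptyset$ iff $uu' \in E(G)$: two members of $C$ overlap on $\ell_C$ exactly when their segments meet; a member of $C$ and a transversal $w$ meet exactly when $p_w$ lies in the $x$-extent of the former; and two transversals $w, w'$ meet exactly when $p_w = p_{w'}$ (both already cross $\ell_C$, so their $y$-extents share the level of $\ell_C$), which is precisely $I_w \cap I_{w'} \neq \emptyset$. Hence $\{I_u : u \in N[C]\}$ is an interval representation of $G[N[C]]$.

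For part (b), let $G'$ be the minor obtained by contracting every component of $H$; its vertices are the components $C$, each inheriting the orientation of $\ell_C$. I would first prove $G'$ is bipartite, with the horizontal components on one side and the vertical components on the other. An edge of $G'$ comes from some $uv \in E(G)$ with $u \in C$, $v \in C'$, $C \neq C'$; then $l_u \subseteq \ell_C$ and $l_v \subseteq \ell_{C'}$ meet. If $\ell_C$ and $\ell_{C'}$ had the same orientation they would be either parallel (no intersection, impossible) or equal, in which case $l_u, l_v$ would be collinear and intersecting, forcing $uv \in E(H)$ and $C = C'$, a contradiction. So every edge of $G'$ joins a horizontal component to a vertical one. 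Finally, representing each component $C$ by the single segment $U_C$ yields a \B0 representation of $G'$: for $C \neq C'$, the images $U_C, U_{C'}$ intersect iff some $l_u$ ($u \in C$) meets some $l_v$ ($v \in C'$), i.e.\ iff $CC' \in E(G')$. Thus $G'$ is a bipartite \B0 graph.

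The main obstacle is part (a): one must argue that intersection of the projected intervals captures adjacency \emph{exactly}, and the only way this can fail is a hidden intersection of two neighbors of $C$ away from $\ell_C$. The transversality observation is precisely what rules this out, since two neighbors outside $C$ are both perpendicular to $\ell_C$ and both cross it, so they can meet only by sharing their crossing point, which the point-intervals already record. A secondary technical point, which I would settle at the outset, is fixing orientations for point segments so that collinearity is literally an equivalence relation and each component has a well-defined supporting line.
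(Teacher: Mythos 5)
Your proposal is correct and follows essentially the same route as the paper: the same subgraph $H$ (edges whose segments are intersecting and collinear), the same projection of $N[C]$ onto the supporting line of $C$ (with neighbors outside $C$ reduced to point intervals at their crossing points) for part (a), and the same contraction of each component to the union segment $U_C$ for part (b). Your write-up merely makes explicit the steps the paper leaves as ``easily verified,'' notably the transversality of segments of $N[C]\setminus C$ and the orientation convention for point segments.
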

	\begin{proof}
	Since $G$ is \B0, there exists a \B0 representation $D$ of $G$. 
	Let $H$ be the spanning subgraph of $G$ in which two vertices are adjacent if and only if the corresponding line segments in $D$ are intersecting and collinear.
	Since any diamond diagonal of $G$ is represented by the intersection of collinear line segments in $D$, their endpoints are adjacent in $H$ too.
	Thus for proving (a), it remains to show that the closed neighborhood of any component of $H$ is an interval graph.
	Let $C$ be a component of $H$ and let $D_C$ be the drawing induced by the segments of $D$ that represent the vertices in $N[C]$.
	In $D_C$, if we restrict the segments that represent the vertices in $N[C] \setminus C$ to point intervals at their intersection point with a vertex in $C$,
	we obtain an interval representation for the subgraph of $G$ induced on $N[C]$.
	To obtain the minor $G_H$ of $G$ in (b), we contract every edge of $H$.
	Each component $C$ of $H$, therefore becomes a branch set of $G_H$, and can be represented as a line segment obtained as the union of all the segments representing vertices in $C$.
	This gives a \B0 representation of $G_H$.
	It is easily verified that it is a bipartite graph since there are no collinear intersections.
	\end{proof}

	\begin{remark}
	Note that since $G_H$ is bipartite, $H$ contains an odd number of edges from each odd cycle of $G$.
	\end{remark}

	
	\begin{figure}[H]
		\centering
		\begin{subfigure}[b]{0.2\linewidth}
			\centering
			\begin{tikzpicture}[scale=.5]
			
			\draw[black, thick] (-2,0) -- (0,2) -- (2,0) -- cycle;
			\draw[blue, ultra thick] (-1,1) -- (1,1) -- (0,0) -- cycle;
			
			\filldraw[black] (0,0) circle (3pt);
			\filldraw[black] (-2,0) circle (3pt);
			\filldraw[black] (2,0) circle (3pt);
			\filldraw[black] (0,2) circle (3pt);
			\filldraw[black] (-1,1) circle (3pt);
			\filldraw[black] (1,1) circle (3pt);
			\end{tikzpicture}
			\caption{3-Sun}\label{fig:sun}
		\end{subfigure}
		\begin{subfigure}[b]{0.2\linewidth}
			
			\centering
			\begin{tikzpicture}[scale=.5]
			
			\draw[black,  thick] (0,0) -- (-1,1) -- (0,2) -- (1,1) -- cycle;
			\draw[black, thick] (-1.5,0) -- (0,0) -- (-.7,-1) -- (-2,-1)  -- cycle;
			\draw[black, thick] (0,0) -- (1.5,0) -- (2,-1)  -- (.7,-1) --  cycle;
			
			\draw[black, thick] (0,0) -- (0,3);
			\draw[black, thick] (0,0) -- (-3,-1.5);
			\draw[black, thick] (0,0) -- (3,-1.5);

			\draw[blue, ultra thick] (0,0) -- (0,2);
			\draw[blue, ultra thick] (0,0) -- (-2,-1);
			\draw[blue, ultra thick] (0,0) -- (2,-1);
			
			\filldraw[black] (0,0) circle (3pt);
			\filldraw[black] (-1,1) circle (3pt);
			\filldraw[black] (0,2) circle (3pt);
			\filldraw[black] (1,1) circle (3pt);
			\filldraw[black] (-1.5,0) circle (3pt);
			\filldraw[black] (-.7,-1) circle (3pt);
			\filldraw[black] (-2,-1) circle (3pt);
			\filldraw[black] (1.5,0) circle (3pt);
			\filldraw[black] (2,-1) circle (3pt);
			\filldraw[black] (.7,-1) circle (3pt);
			\filldraw[black] (0,3) circle (3pt);
			\filldraw[black] (-3,-1.5) circle (3pt);
			\filldraw[black] (3,-1.5) circle (3pt);

			\end{tikzpicture}
			\caption{Kite}
		\end{subfigure}
		\begin{subfigure}[b]{0.25\linewidth}
			
			\centering
			\begin{tikzpicture}[scale=.5]
			
			\draw[red, thick] (0,0) -- (-2,0) -- (-2,2) -- (0,2) -- cycle;
			
			\draw[black, thick] (-2,0) -- (-1,-1) -- (-2,-2) -- cycle;
			\draw[black, thick] (0,0) -- (0,-2) -- (-1,-1) -- cycle;
			
			\draw[blue, ultra thick] (-2,0) -- (-1,-1);
			\draw[blue, ultra thick] (0,0) -- (-1,-1);
			
			\draw[blue, ultra thick] (-2,0) -- (0,0);
			
			\filldraw[black] (0,0) circle (3pt);
			\filldraw[black] (-2,0) circle (3pt);
			\filldraw[black] (-2,2) circle (3pt);
			\filldraw[black] (0,2) circle (3pt);
			\filldraw[black] (-2,-2) circle (3pt);
			\filldraw[black] (0,-2) circle (3pt);
			\filldraw[black] (-1,-1) circle (3pt);
			
			\end{tikzpicture}
			\caption{Bookmark}
		\end{subfigure}
		\begin{subfigure}[b]{0.3\linewidth}
			\centering
			\begin{tikzpicture}[scale=.5]
			
			\draw[black, thick] (0,0) -- (-1.5,1) -- (-1,2.5) -- (1,2.5) -- (1.5,1) -- cycle;
			
			\draw[black, thick] (0,0) -- (1.5,1) -- (3,1) -- (1.5,0) -- cycle;
			\draw[blue, ultra thick] (1.5,1) -- (1.5,0);
			\draw[black, thick] (1.5,0) -- (3,0);
			
			\draw[black, thick] (0,0) -- (0,-1) -- (-1.5,0) -- (-1.5,1) -- cycle;
			\draw[blue, ultra thick] (0,0) -- (-1.5,0);
			\draw[black, thick] (-1.5,0) -- (-1.5,-1);
			
			\draw[black, thick] (-1.5,1) -- (-2.6,.5) -- (-2.2,2) -- (-1,2.5) -- cycle;
			\draw[blue, ultra thick] (-1.5,1) -- (-2.2,2);
			\draw[black, thick] (-2.2,2) -- (-3.2,1.5);
			
			\draw[black, thick] (-1,2.5) -- (-2,3.5) -- (0,3.5) -- (1,2.5) -- cycle;
			\draw[blue, ultra thick] (-1,2.5) -- (0,3.5);
			\draw[black, thick] (0,3.5) -- (-1,4.5);
			
			\draw[black, thick] (1,2.5) -- (1.7,3.5) -- (2.2,2) -- (1.5,1) -- cycle;
			\draw[blue, ultra thick] (1,2.5) -- (2.2,2);
			\draw[black, thick] (2.2,2) -- (3,3.2);
			
			\draw[forestgreen, ultra thick] (-1,2.5) -- (1,2.5);
			
			\filldraw[black] (0,0) circle (3pt);
			\filldraw[black] (-1.5,1) circle (3pt);
			\filldraw[black] (-1,2.5) circle (3pt);
			\filldraw[black] (1,2.5) circle (3pt);
			\filldraw[black] (1.5,1) circle (3pt);
			
			\filldraw[black] (3,1) circle (3pt);
			\filldraw[black] (1.5,0) circle (3pt);
			\filldraw[black] (3,0) circle (3pt);
			\filldraw[black] (0,-1) circle (3pt);
			\filldraw[black] (-1.5,0) circle (3pt);
			\filldraw[black] (-1.5,-1) circle (3pt);

			\filldraw[black] (-2.6,.5) circle (3pt);
			\filldraw[black] (-2.2,2) circle (3pt);
			\filldraw[black] (-3.2,1.5) circle (3pt);
			
			\filldraw[black] (-2,3.5) circle (3pt);
			\filldraw[black] (0,3.5) circle (3pt);
			\filldraw[black] (-1,4.5) circle (3pt);
			
			\filldraw[black] (1.7,3.5) circle (3pt);
			\filldraw[black] (2.2,2) circle (3pt);
			\filldraw[black] (3,3.2) circle (3pt);
			
			\end{tikzpicture}
			\caption{Ninja Star}\label{fig:ninja}
		\end{subfigure}
		\begin{subfigure}[b]{0.3\linewidth}
			\centering
			\begin{tikzpicture}[scale=.5]
			
			\draw[red, thick] (0,0) -- (-1.5,1) -- (-1,2.5) -- (1,2.5) -- (1.5,1) -- cycle;
			
			\draw[red, thick] (-1,2.5) -- (-1,3.5) -- (1,3.5) -- (1,2.5) -- cycle;
			\draw[red, thick] (1,2.5) -- (2,3) -- (2.5,1.5) -- (1.5,1) -- cycle;
			\draw[red, thick] (-1,2.5) -- (-2,3) -- (-2.5,1.5) -- (-1.5,1) -- cycle;
			\draw[red, thick] (1.5,1) -- (2.5,.5) -- (1,-.5) -- (0,0) -- cycle;
			\draw[red, thick] (-1.5,1) -- (-2.5,.5) -- (-1,-.5) -- (0,0) -- cycle;
			
			\draw[forestgreen, ultra thick] (-1,2.5) -- (1,2.5);
			
			\filldraw[black] (0,0) circle (3pt);
			\filldraw[black] (-1.5,1) circle (3pt);
			\filldraw[black] (-1,2.5) circle (3pt);
			\filldraw[black] (1,2.5) circle (3pt);
			\filldraw[black] (1.5,1) circle (3pt);
			\filldraw[black] (-1,3.5) circle (3pt);
			\filldraw[black] (1,3.5) circle (3pt);
			\filldraw[black] (2,3) circle (3pt);
			\filldraw[black] (2.5,1.5) circle (3pt);
			
			\filldraw[black] (-2,3) circle (3pt);
			\filldraw[black] (-2.5,1.5) circle (3pt);
			\filldraw[black] (2.5,.5) circle (3pt);
			\filldraw[black] (1,-.5) circle (3pt);
			\filldraw[black] (-2.5,.5) circle (3pt);
			\filldraw[black] (-1,-.5) circle (3pt);
			\end{tikzpicture}
			\caption{Odd cycle with $C_4$ petals}\label{fig:C4petal}
		\end{subfigure}
		\begin{subfigure}[b]{0.3\linewidth}
			\centering
			\begin{tikzpicture}[scale=.5]
			
			\draw[red, double, thick] (0,0) -- (-1,1) -- (-.75,2.5) -- (.75,2.5) -- (1,1) -- cycle;
			\draw[red, double, thick] (0,0) -- (-.75,-1.5) -- (-2,-2) -- (-3,-1) -- (-2,0) -- cycle;
			\draw[red, double, thick] (0,0) -- (.75,-1.5) -- (2,-2) -- (3,-1) -- (2,0) -- cycle;
			
			\draw[white, double, thick] (0,0) -- (1,1);
			\draw[white, double, thick] (0,0) -- (-2,0);
			\draw[white, double, thick] (0,0) -- (.75,-1.5);
			
			\draw[blue, ultra thick] (0,0) -- (1,1);
			\draw[blue, ultra thick] (0,0) -- (-2,0);
			\draw[blue, ultra thick] (0,0) -- (.75,-1.5);
			
			\filldraw[black] (0,0) circle (3pt);
			\filldraw[black] (-1,1) circle (3pt);
			\filldraw[black] (-.75,2.5) circle (3pt);
			\filldraw[black] (.75,2.5) circle (3pt);
			\filldraw[black] (1,1) circle (3pt);
			\filldraw[black] (.75,-1.5) circle (3pt);
			\filldraw[black] (-.75,-1.5) circle (3pt);
			\filldraw[black] (-2,-2) circle (3pt);
			\filldraw[black] (2,-2) circle (3pt);
			\filldraw[black] (-3,-1) circle (3pt);
			\filldraw[black] (3,-1) circle (3pt);
			\filldraw[black] (2,0) circle (3pt);
			\filldraw[black] (-2,0) circle (3pt);	
			\end{tikzpicture}
			\caption{}
		\end{subfigure}
		\begin{subfigure}[b]{0.3\linewidth}
			\centering
			\begin{tikzpicture}[scale=.6]

			\draw[red, double, thick] (0,0) -- (-.75,1) -- (-.75,2.5) -- (.75,2.5) -- (.75,1) -- cycle;
			\draw[red, double, thick] (0,0) -- (.75,1) -- (2,1.5) -- (2.5,.75) -- (1.5,0) -- cycle;
			\draw[red, double, thick] (0,0) -- (-.75,1) -- (-2,1.5) -- (-2.5,.75) -- (-1.8,.1) -- cycle;
			
			\draw[red, double, thick] (0,0) -- (1.5,0) -- (2.5,-.5) -- (2,-1.25) -- (.75,-1) -- cycle;
			
			\draw[red, double, thick] (0,0) -- (.75,-1) -- (.75,-2) -- (-.75,-2) -- (-.75,-1) -- cycle;
			
			\draw[red, double, thick] (0,0) -- (-.75,-1) -- (-1.8,-1.8) -- (-2.7,-1) -- (-1.8,-.3) -- cycle;
			
			\draw[white, double, thick] (0,0) -- (-.75,1);
			\draw[white, double, thick] (0,0) -- (.75,1);
			\draw[white, double, thick] (0,0) -- (1.5,0);
			\draw[white, double, thick] (0,0) -- (.75,-1);
			\draw[white, double, thick] (0,0) -- (-.75,-1);

			\draw[blue, ultra thick] (0,0) -- (-.75,1);
			\draw[black, thick] (0,0) -- (.75,1);
			\draw[blue, ultra thick] (0,0) -- (1.5,0);
			\draw[black, thick] (0,0) -- (.75,-1);
			\draw[blue, ultra thick] (0,0) -- (-.75,-1);
			
			\filldraw[black] (0,0) circle (3pt);
			\filldraw[black] (-.75,1) circle (3pt);
			\filldraw[black] (-.75,2.5) circle (3pt);
			\filldraw[black] (.75,2.5) circle (3pt);
			\filldraw[black] (.75,1) circle (3pt);
			
			\filldraw[black] (2,1.5) circle (3pt);
			\filldraw[black] (2.5,.75) circle (3pt);
			\filldraw[black] (1.5,0) circle (3pt);

			\filldraw[black] (-2,1.5) circle (3pt);
			\filldraw[black] (-2.5,.75) circle (3pt);
			\filldraw[black] (-1.8,.1) circle (3pt);
			
			\filldraw[black] (2.5,-.5) circle (3pt);
			\filldraw[black] (2,-1.25) circle (3pt);
			\filldraw[black] (-.75,-1) circle (3pt);
			
			\filldraw[black] (.75,-1) circle (3pt);
			
			\filldraw[black] (.75,-2) circle (3pt);
			\filldraw[black] (-.75,-2) circle (3pt);
			
			\filldraw[black] (-1.8,-1.8) circle (3pt);
			\filldraw[black] (-2.7,-1) circle (3pt);
			\filldraw[black] (-1.8,-.3) circle (3pt);

			\end{tikzpicture}
			\caption{}
		\end{subfigure}
		\caption{A few outerplanar graphs which are not \B0. The {\color{blue}blue} edges represent forced collinear edges.
		The {\color{forestgreen}green} edges in (d) and (e) represent the edges arbitrarily chosen in an odd cycle to become collinear in a \B0 drawing. 
		The 4-cycles in (e) resemble petals and hence the name.
		The {\color{red}red double} edges in (f) and (g) represent $C_4$ edges where the $C_4$ (petal) is not drawn to avoid cluttering.}
		\label{fig:forbidden}
	\end{figure}
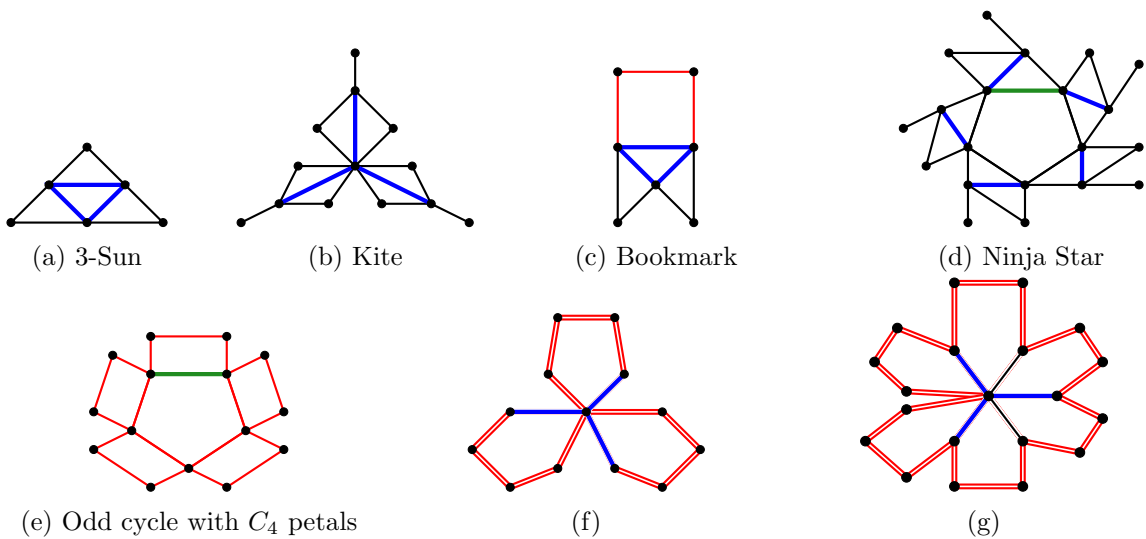
	
	Proposition~\ref{propNecessity} shows that the outerplanar graphs in Figure \ref{fig:forbidden} are not \B0.	
	Nonetheless, the above proposition is not sufficient to characterize \B0 even among outerplanar graphs. Figure \ref{fig:notSufficient} is such an example.	

	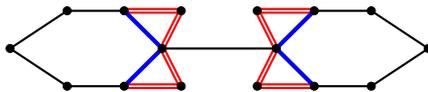
\begin{figure}[H]
		
		\centering
		\begin{subfigure}[b]{0.5\linewidth}
			\centering
			\begin{tikzpicture}[scale=.5]
			\centering
					\draw[black, thick] (0,0) -- (-3,0);
					\draw[black, thick] (0,0) -- (1,1) -- (2.5,1) -- (4,0) -- (2.5,-1) -- (1,-1) -- cycle;
					
					\draw[red, double, thick] (0,0) -- (-.5,1);
					\draw[red, double, thick] (-.5,1) -- (1,1);
					
					\draw[red, double, thick] (0,0) -- (-.5,-1);
					\draw[red, double, thick] (-.5,-1) -- (1,-1);
					
					
					\draw[blue, ultra thick] (0,0) -- (1,1);
					\draw[blue, ultra thick] (0,0) -- (1,-1);
					
					\draw[black, thick] (-3,0) -- (-4,1) -- (-5.5,1) -- (-7,0) -- (-5.5,-1) -- (-4,-1) -- cycle;
					
					\draw[red, double, thick] (-3,0) -- (-2.5,1);
					\draw[red, double, thick] (-2.5,1) -- (-4,1);
					
					
					\draw[red, double, thick] (-3,0) -- (-2.5,-1);
					\draw[red, double, thick] (-2.5,-1) -- (-4,-1);
					
					\draw[blue, ultra thick] (-3,0) -- (-4,1);
					\draw[blue, ultra thick] (-3,0) -- (-4,-1);

					\filldraw[black] (0,0) circle (3pt);
					\filldraw[black] (1,1) circle (3pt);
					\filldraw[black] (2.5,1) circle (3pt);
					\filldraw[black] (4,0) circle (3pt);
					\filldraw[black] (2.5,-1) circle (3pt);
					\filldraw[black] (1,-1) circle (3pt);

					\filldraw[black] (-3,0) circle (3pt);
					\filldraw[black] (-4,1) circle (3pt);
					\filldraw[black] (-5.5,1) circle (3pt);
					\filldraw[black] (-7,0) circle (3pt);
					\filldraw[black] (-5.5,-1) circle (3pt);
					\filldraw[black] (-4,-1) circle (3pt);
					
					\filldraw[black] (-.5,1) circle (3pt);
					\filldraw[black] (-.5,-1) circle (3pt);
					\filldraw[black] (-2.5,1) circle (3pt);
					\filldraw[black] (-2.5,-1) circle (3pt);

			\end{tikzpicture}
		\end{subfigure}
		\caption{An outerplanar graph which is not \B0. This shows that the conditions in Proposition \ref{propNecessity} was not sufficient. The {\color{red}red double} edges represent $C_4$ edges where the $C_4$ is not drawn to avoid cluttering. The {\color{blue}blue} edges are forced to be collinear. Hence the bridge has to be realized as an orthogonal intersection and this prevents a non-crossing drawing of the two $6$-cycles.}
		\label{fig:notSufficient}
	\end{figure}

	\subsubsection*{Acknowledgments}
	We thank K.~Murali Krishnan and Mathew C. Francis for fruitful discussions.

\bibliographystyle{splncs04}
\bibliography{reference}

\end{document}